\newcommand{\g}{\mathfrak{g}} 
\renewcommand{\sl}{\mathfrak{sl}} 
\newcommand{\R}{\mathbb{R}} 
\newcommand{\C}{\mathbb{C}} 
\renewcommand{\k}{\mathbb{K}} 
\theoremstyle{plain}
\newtheorem{theorem}{Theorem}[section]
\newtheorem{satz}{Theorem}[section]
\newtheorem{conjecture}[theorem]{Conjecture}
\newtheorem{corollary}[theorem]{Corollary}
\newtheorem{Lemma}[theorem]{Lemma}
\newtheorem{lemma}[theorem]{Lemma}
\newtheorem{definition}[theorem]{Definition}
\newtheorem*{Bsp}{Example}
\newtheorem*{List}{List}
\newtheorem{remark}[theorem]{Remark}
\newcommand{\supp}{\mathrm{supp}}
\newcommand{\pr}{\mathrm{pr}}
\title[On right coideal subalgebras of quantum groups]
{On right coideal subalgebras of quantum groups}
\author{\hspace{1.5cm}Karolina Vocke, karolina.vocke@gmx.de\newline Phillips-University 
Marburg, \\ Fachbereich Mathematik und Informatik, \\ Hans-Meerwein-Strasse, 35032 Marburg, Germany }
\begin{document}

\begin{abstract}
Right coideal subalgebras are interesting substructures of Hopf algebras such as quantum groups. Examples of right coideal subalgebras are the quantum Borel part as well as quantum symmetric pairs. Classifying right coideal subalgebras is a difficult question with notable results by Schneider, Heckenberger and Kolb. After reviewing these results, as main result we prove that an arbitrary right coideal subalgebras has a particularly nice set of generators. This allows in principle to specify the set of right coideal subalgebras in a given case. 
As application we determine right coideal subalgebras of the quantum groups $U_q(\sl_2)$ and $U_q(\sl_3)$ and discuss their representation theoretic properties.\\

Keywords: Quantum group, Hopf algebra, Right coideal subalgebra
\end{abstract}

\maketitle
\tableofcontents

\newpage


\section{Introduction}

The quantum group $U_q(\g)$ is a deformation of the universal enveloping algebra $U(\g)$ of a semisimple Lie algebra (see e.g. \cite{Jan96}), and it carries the structure of a Hopf algebra. A right coideal subalgebra $C\subset U_q(\g)$ is defined to be a subalgebra $C$ such that $\Delta(C)\subset C\otimes U_q(\g)$. 
While the Hopf subalgebras, fulfilling $\Delta(H)\subset H\otimes H$, are well understood for quantum groups and correspond to what we expect from the Lie algebra, the concept of right coideal subalgebras (RCS) appears to be much more difficult.\\ 

The study of RCS in a broader context has far-reaching applications in Skryabins freeness results and for the structure theory of quantum groups and Nicols algebras. For example in \cite{HS09} Heckenberger and Schneider constructed the PBW basis systematically by a flag of RCS.
 As a very interesting byproduct they prove that homogeneous right coideal subalgebra are in 1:1 correnspondence with the Weyl group of $U_q(\g)$. Moreover the study of RCS yields interesting Lie-theoretic structures like the quantum Borel part $U_q(\g)^+$ and quantum symmetric pairs.\\ 
 
 Recently Heckenberger and Kolb have classified homogeneous RCS (i.e. RCS $C$ with $U^0\subset C$) in \cite{HK11a}, as well as RCS of the positive Borelpart in \cite{HK11b}. Thus broad classes of RCS are already known:
 \begin{center}
\begin{tabular}{|c||c|c|}
\hline
 & $C\subset U_q(\mathfrak{g})^{\geq0}$ & $C\subset U_q(\mathfrak{g})$ \\
\hline\hline
&&\\ 
$U^0\subset C$ & $C=U^+[x]U^0$ & $C=U^+[x]U^0U^+[y]$\\
\hline
&&\\
$U^0\cap C=:T_L$& $(U^+[x]T_L)_\chi$ & general case ?\\
\hline
\end{tabular}
\end{center}~\\

 In Section 2 we review these currently known classification results on RCS.\\

In Section 3 we construct a set of generators for an arbitrary RCS with the property that $C^0:=C\cap U^0$ is a Hopf subalgebra. In particular we show, that one can choose a generating system, consisting of elements whose leading terms lie in $U^{\geq0}$ resp. $U^{\leq0}$. Even more we prove that we can take any generator with at most one $E$- and one $F$-leading term, which are moreover root vectors of $\psi(U^+[w^+]),U^-[w^-]$ for suitably chosen $w^+,w^-\in W$. This statement is our Main Theorem \ref{hauptsatzgenerator}.

$$
\lambda_EE^{\phi_E}_{\mu}+\lambda_F K^{-1}_{\mu-\nu}F^{\phi_F}_{\nu}+\lambda_KK^{-1}_{\mu} 
$$
 \\

In Section 4 we demonstrate our result by determining all RCS of $U_q(\sl_2)$ and $U_q(\sl_3)$ explicitly. We find the
quantum Borel part and its reflections, the quantum symmetric pairs, smaller quantum groups of smaller rank and construct many nontrivial new RCS. With this knowledge we can also classify all so-called Borel subalgebras (i.e. RCS maximal with the property, that any finite dimensional simple representation is onedimensional) of $U_q(\sl_2)$ and $U_q(\sl_3)$.
%
%
%
%
%

\section{Preliminaries}

Let $\g$ be a finite-dimensional semisimple Lie algebra of rank $n$ over the field of complex numbers $\k=\C$.

We denote by $\Pi=\{\alpha_1,\ldots \alpha_n\}$ a set of positive simple roots, by $Q$ the root lattice, and by $\Phi^+\subset Q$ the set of all positive roots. We denote by $(,)$ the symmetric bilinear form on $\R^\Pi$ with the Cartan matrix $c_{ij}=2\frac{(\alpha_i,\alpha_j)}{(\alpha_j,\alpha_j)}$.

Our article is concerned with the quantum group $U=U_q(\g)$ where $q$ is not a root of unity. There exist root vectors $E_\mu$ for all $\mu\in \Phi^+$, constructed by algebra automorphisms $T_w$ due to Lusztig  for each Weyl group element $w\in W$, see \cite{Jan96} Chapter 8.\\ 

A subalgebra $C$ of a Hopf algebra $H$ is called a right coideal subalgebra (RCS) if $\Delta(C)\subset C\otimes H$. Three essential results in the theory of coideal subalgebras of quantum groups are:\\

Call a right coideal subalgebra $C\subset U_q(\g)$ homogeneous iff $U^0\subset C$ (in particular $C$ is homogeneous with respect to the $Q$-grading). 
\begin{theorem}[\cite{HS09} Theorem 7.3]\label{HS09} For every $w\in W$ there is an RCS $U^+[w]U^0$, where $U^+[w]$ is generated by the root vectors $E_{\beta_i}$ for all $\beta_i$ in the subset of roots 
$$\Phi^+(w)=\{\alpha\in \Phi^+\mid w^{-1}\alpha\prec0\}=\{\beta_i\mid i\in\{1,\ldots,\ell(w)\}\}$$
In particular $|\Phi^+(w)|=\ell(w)$, and 
\begin{align}\label{redexpvonteilmenge}
v<w\text{ iff }\Phi^+(v)\subseteq\Phi^+(w)
\end{align}
and for the longest element $\Phi^+(w_0)=\Phi^+$.

Conversely, every homogeneous RCS $C\subset U_q^+(\g)U^0$ is of this form for some $w$.
\end{theorem}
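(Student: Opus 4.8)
The plan is to prove the two directions separately, first establishing the claim for a fixed $w\in W$ and then the converse classification.

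\emph{Forward direction.} Fix a reduced expression $w = s_{i_1}\cdots s_{i_\ell}$ and recall that the roots $\beta_k = s_{i_1}\cdots s_{i_{k-1}}(\alpha_{i_k})$, $k=1,\ldots,\ell$, are exactly the elements of $\Phi^+(w)$, listed in the convex order induced by the word, and that $E_{\beta_k} = T_{i_1}\cdots T_{i_{k-1}}(E_{i_k})$. I would first show that $U^+[w]$, the subalgebra generated by these $E_{\beta_k}$, equals the span of the ordered PBW monomials $E_{\beta_1}^{a_1}\cdots E_{\beta_\ell}^{a_\ell}$. This follows from the Levendorskii--Soibelman straightening relations: for $i<j$ the product $E_{\beta_j}E_{\beta_i}$ expands as $q^{(\beta_i,\beta_j)}E_{\beta_i}E_{\beta_j}$ plus a linear combination of monomials in the $E_{\beta_m}$ with $i<m<j$, so the span is closed under multiplication precisely because $\Phi^+(w)$ is an initial convex segment. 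Closure of $U^+[w]U^0$ then follows from $K_\mu E_\beta = q^{(\mu,\beta)}E_\beta K_\mu$.

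\emph{Coideal property.} I would use the triangular shape of the coproduct on root vectors: $\Delta(E_{\beta_k}) \in E_{\beta_k}\otimes 1 + K_{\beta_k}\otimes E_{\beta_k} + R_k$, where every correction term in $R_k$ has left tensor leg a product of root vectors $E_{\beta_m}$ with $m<k$ times an element of $U^0$. The cleanest route is induction on $\ell(w)$ via the recursion $\Phi^+(s_i w') = \{\alpha_i\}\cup s_i\Phi^+(w')$ together with the compatibility of the Lusztig automorphism $T_i$ with $\Delta$. Since all left legs again lie in $U^+[w]U^0$, one obtains $\Delta(U^+[w]U^0)\subset U^+[w]U^0\otimes U$, as required. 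The combinatorial statements (2)--(4) are then standard facts about the inversion set $\Phi^+(w) = \{\alpha\in\Phi^+ : w^{-1}\alpha\prec 0\}$: one has $|\Phi^+(w)| = \ell(w)$; the weak order on $W$ is characterized by inclusion of inversion sets, giving $v<w \Leftrightarrow \Phi^+(v)\subseteq\Phi^+(w)$; and $w_0$ sends every positive root to a negative one, so $\Phi^+(w_0) = \Phi^+$.

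\emph{Converse, the hard part.} Given a homogeneous RCS $C\subset U^+U^0$, I would first reduce to the positive part, showing $C = C^+U^0$ with $C^+ := C\cap U^+$ a $Q$-graded right coideal subalgebra of the Nichols algebra $U^+$; this uses $U^0\subset C$ together with $Q$-homogeneity and the $U^0$-bimodule structure to split off the torus. The goal is then $C^+ = U^+[w]$ for a unique $w$, and this is where the main obstacle lies. I would attack it with the Nichols-algebra theory of right coideal subalgebras: one proves that a graded right coideal subalgebra $C^+$ carries its own PBW basis in a set of root vectors, and that the set $S\subseteq\Phi^+$ of roots occurring is forced to be biconvex, i.e. both $S$ and $\Phi^+\setminus S$ are closed under taking roots lying between two of their members in every convex order. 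Biconvex subsets of $\Phi^+$ are precisely the inversion sets $\Phi^+(w)$, whence $C^+ = U^+[w]$. Finally I would match cardinalities: the forward construction yields at least $|W|$ distinct such subalgebras (distinct since $w\mapsto\Phi^+(w)$ is injective), while biconvexity yields at most $|W|$, so $w\mapsto U^+[w]U^0$ is a bijection onto the homogeneous RCS, giving both existence and uniqueness. The delicate point throughout the converse is the existence of the PBW basis for $C^+$ and the proof that its support is biconvex rather than merely convex.
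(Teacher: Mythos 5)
You should first know that the paper contains no proof of this statement: Theorem \ref{HS09} is quoted as a preliminary from \cite{HS09} (Theorem 7.3 there), and the author never reproves it. So there is no in-paper argument to compare yours against; your proposal has to stand on its own, and as written it does not yet.

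Two concrete gaps. In the forward direction, your coideal argument rests on ``the compatibility of the Lusztig automorphism $T_i$ with $\Delta$'', but no such compatibility exists in usable form: $T_i$ is an algebra automorphism of $U$ which is \emph{not} a coalgebra map, and this failure is exactly why the triangularity statement $\Delta(E_{\beta_k})\in E_{\beta_k}\otimes 1+K_{\beta_k}\otimes E_{\beta_k}+\sum_{m<k}\bigl(\text{monomials in }E_{\beta_m}\bigr)K\otimes U^+$ is a genuine lemma (of Levendorskii--Soibelman type, cf.\ \cite{LS91}, or proved within \cite{HS09}) rather than a formal consequence of the recursion $\Phi^+(s_iw')=\{\alpha_i\}\cup s_i\Phi^+(w')$; you would need an honest induction or a quasi-$R$-matrix argument here. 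In the converse direction, the two pivotal claims --- that an arbitrary $Q$-graded right coideal subalgebra $C^+\subset U^+$ admits a PBW basis built from root vectors, and that the set of roots occurring is biconvex --- are precisely the hard core of the theorem being proved; you assert both (and you flag them yourself as ``the delicate point''), but flagging the gap does not fill it, and once they are granted, Papi's characterization \cite{PA} of biconvex sets as inversion sets finishes matters almost trivially. That said, your architecture for the converse --- split off $U^0$, PBW basis for $C^+$, biconvexity of the support, Papi, then a counting argument --- is a legitimate alternative route to the Weyl-groupoid/Duflo-order induction actually carried out in \cite{HS09}, and would be worth executing in full; as it stands, however, it is a plan rather than a proof.
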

\begin{theorem}[\cite{HK11a} Theorem 3.8]
The homogeneous RCS $C\subset U_q(\g)$ are of the form 
$$C=U^+[w]U^0U^-[v]$$
for a certain subset of pairs $v,w\in W$. 
\end{theorem}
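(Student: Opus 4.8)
The plan is to bootstrap the full classification from the Borel case of Theorem~\ref{HS09} by cutting $C$ with the two triangular halves. Since $U^{\geq0}=U^+U^0$ and $U^{\leq0}=U^-U^0$ are Hopf subalgebras of $U$, the intersections $C^{\geq0}:=C\cap U^{\geq0}$ and $C^{\leq0}:=C\cap U^{\leq0}$ are again subalgebras containing $U^0$. First I would check that each is a right coideal subalgebra of the respective Borel part: for instance $\Delta(C^{\geq0})\subseteq(C\otimes U)\cap(U^{\geq0}\otimes U^{\geq0})$, and using the PBW decomposition $U=U^{\geq0}\oplus U^-_{>0}U^0U^+$ together with the $Q$-homogeneity of $C$ one identifies the right-hand side with $C^{\geq0}\otimes U^{\geq0}$. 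Applying Theorem~\ref{HS09} to $C^{\geq0}$, and its mirror analogue (obtained from the Cartan involution interchanging the $E$'s and $F$'s) to $C^{\leq0}$, produces Weyl group elements $w,v\in W$ with $C^{\geq0}=U^+[w]U^0$ and $C^{\leq0}=U^0U^-[v]$. Because $C$ is an algebra containing both halves, this already yields the easy inclusion $U^+[w]U^0U^-[v]=C^{\geq0}C^{\leq0}\subseteq C$.

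The substance of the theorem is the reverse inclusion $C\subseteq U^+[w]U^0U^-[v]$, and this is where I expect the real work --- and the main obstacle --- to lie: the two intersections only see the extreme parts of $C$ and give no a priori control over ``entangled'' elements whose $E$- and $F$-content cannot be separated. To handle these I would filter $U$ by total $F$-degree, so that its associated graded algebra is a twisted tensor product of $U^{\geq0}$ and $U^-$, and pass the coideal condition $\Delta(C)\subseteq C\otimes U$ to $\gr C$. For a homogeneous $c\in C$ I would isolate its top $F$-degree component and apply the coideal condition to force the corresponding leading $F$-root vectors to lie in $U^-[v]$; subtracting a suitable product from $C^{\geq0}C^{\leq0}\subseteq C$ then strictly lowers the $F$-degree, and induction closes the gap. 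This is exactly the leading-term mechanism that is isolated and proved in Main Theorem~\ref{hauptsatzgenerator}, and a clean graded-character (i.e.\ $Q\times Q$-graded dimension) comparison of $\gr C$ with $\gr(U^+[w]U^0U^-[v])$ is what makes the induction rigorous rather than merely plausible.

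It remains to determine the ``certain subset of pairs'' $(w,v)$, since not every pair yields a subalgebra that is also a coideal. Here I would impose two closure requirements on the candidate $U^+[w]U^0U^-[v]$: closure under multiplication, which amounts to a straightening condition $U^-[v]\,U^+[w]\subseteq U^+[w]U^0U^-[v]$, and closure under $\Delta$, which forces the mixed cross-terms produced by comultiplying products of root vectors back into $C\otimes U$. Testing these on the root-vector generators relative to a fixed convex order translates them into a combinatorial compatibility between the inversion sets $\Phi^+(w)$ and $\Phi^+(v)$; conversely, for every compatible pair a direct PBW computation confirms that $U^+[w]U^0U^-[v]$ is a genuine homogeneous right coideal subalgebra, completing the classification.
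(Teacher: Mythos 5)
First, a point of comparison: the paper does \emph{not} prove this statement at all --- it is quoted as a preliminary from Heckenberger--Kolb \cite{HK11a}, so your attempt can only be judged on its own merits, not against an internal proof. Your first step is sound, and even easier than you make it: $(C\otimes U)\cap(U^{\geq0}\otimes U^{\geq0})=(C\cap U^{\geq0})\otimes U^{\geq0}$ holds by pure linear algebra (no PBW decomposition or homogeneity needed), so $C^{\geq0}$ and $C^{\leq0}$ are homogeneous right coideal subalgebras of the two Borel halves; Theorem \ref{HS09} and its image under the Cartan involution then give $C^{\geq0}=U^+[w]U^0$, $C^{\leq0}=U^0U^-[v]$, hence $U^+[w]U^0U^-[v]\subseteq C$. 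Invoking Theorem \ref{hauptsatzgenerator} for the reverse inclusion is also not circular within this paper's logic, since its proof rests on Theorems \ref{HS09} and \ref{rcsinu+} rather than on the present statement; and in the homogeneous case it does yield a strong reduction, because a $Q$-homogeneous element of degree $\eta$ whose $E$-leading term lies in $U^{\geq0}$ can contain no mixed PBW monomial at all (a monomial $E_{\overline{\mu}}K_{\nu}F_{\overline{\gamma}}$ with $\gamma\neq0$ has $E$-degree $\eta+\gamma\succ\eta$, contradicting maximality), so every generator may be taken inside $U^{\geq0}\cup U^{\leq0}$.

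Nevertheless your reverse inclusion has two genuine gaps. (1) The reduction above only shows that $C$ is the algebra \emph{generated} by $U^+[w]U^0$ and $U^0U^-[v]$, whereas the theorem asserts that $C$ equals the vector-space \emph{product} $U^+[w]U^0U^-[v]$. Passing from the one to the other requires the straightening property $U^-[v]\,U^+[w]\subseteq U^+[w]U^0U^-[v]$, i.e.\ that the commutators $[E_{\mu},F_{\gamma}]$ with $\mu\in\Phi^+(w)$, $\gamma\in\Phi^+(v)$ do not create root vectors outside $\Phi^+(w)$ and $\Phi^+(v)$. You state this closure condition only in your third step, as a criterion for deciding \emph{which} pairs occur; but it is equally indispensable for the forward direction, it is where the real combinatorial work of \cite{HK11a} lies, and nothing in your sketch establishes it for the particular pair $(w,v)$ produced in step one. (2) The device you offer to make the leading-term induction rigorous --- a graded-dimension comparison of $\gr C$ with $\gr(U^+[w]U^0U^-[v])$ --- is circular: the graded dimensions of $C$ are exactly what the theorem determines, so they are not available as an input. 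The non-circular substitute is the functional-extraction mechanism of Lemma \ref{konstant} and Theorem \ref{Leitterme}, applying $(\id\otimes\varphi)\Delta$ to show that the $U^{\geq0}$-coefficient of the top $F$-monomial of $c\in C$ and that monomial's $U^{\leq0}$-part again lie in $C$; this argument must actually be carried out (it occupies several pages of Section \ref{main} in the more general setting), not merely gestured at.
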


Non-homogeneous RCS are only classified on $U^-U^0$ (or $U^+U^0$):
\begin{theorem}[\cite{HK11b} Theorem 2.15]\label{rcsinu+}
For $w\in W$, let $\phi:U^-[w]\to \k$ be a character and define 
$$\supp(\phi):=\{\beta\in Q\mid \exists x_\beta\in U^-[w] \text{ with }\phi(x_\beta)\neq 0\}$$
Take any subgroup $L\subset \supp(\phi)^{\perp}$, then there exists a character-shifted RCS
$$U^-[w]_\phi:=\{\phi(x^{(1)})x^{(2)}\mid \forall x\in U^-[w]\}$$
and an RCS $U^-[w]_\phi T_L$ with group ring $T_L=\k[L]\subset U^0$.

Conversely, every RCS $C\subset U^-(\g)U^0$ if of this form. 
\end{theorem}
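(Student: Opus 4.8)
The plan is to treat the two directions separately. Throughout I use that $U^-[w]$ is itself a right coideal subalgebra: in the convention $\Delta(F_i)=F_i\otimes K_i^{-1}+1\otimes F_i$ the left tensor leg never leaves $U^-$, so $U^-[w]$ is the negative analogue of the subalgebra in Theorem \ref{HS09}. For the forward direction the key device is the winding endomorphism $L_\phi:=(\phi\otimes\id)\circ\Delta$. As $\phi$ is a character and $\Delta$ multiplicative, $L_\phi$ is an algebra map $U\to U$, and a one-line Sweedler computation with coassociativity yields the intertwiner $\Delta\circ L_\phi=(L_\phi\otimes\id)\circ\Delta$. Feeding in $\Delta(U^-[w])\subset U^-[w]\otimes U$ gives $\Delta(L_\phi(U^-[w]))\subset L_\phi(U^-[w])\otimes U$, so $U^-[w]_\phi=L_\phi(U^-[w])$ is again an RCS; explicitly $L_\phi(F_\beta)=F_\beta+\phi(F_\beta)K_\beta^{-1}$, which accounts for the mixed $F$/$K$ leading terms.

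To enlarge by $T_L=\k[L]$, note that $T_L$ is a Hopf subalgebra of $U^0$, so $\Delta(U^-[w]_\phi T_L)\subset U^-[w]_\phi T_L\otimes U$ holds automatically once $U^-[w]_\phi$ is a coideal; the only nontrivial point is that the product be a subalgebra, i.e.\ that each $K_\lambda$ with $\lambda\in L$ normalize $U^-[w]_\phi$. Conjugation sends the generator $F_\beta+\phi(F_\beta)K_\beta^{-1}$ to $q^{-(\lambda,\beta)}F_\beta+\phi(F_\beta)K_\beta^{-1}$, and this stays in $U^-[w]_\phi$ exactly when $(\lambda,\beta)=0$ for all $\beta$ with $\phi(F_\beta)\neq0$, i.e.\ when $L\subset\supp(\phi)^\perp$. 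This is the stated hypothesis, completing the forward direction.

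For the converse let $C\subset U^{\leq0}$ be an arbitrary RCS. I would first pin down the Cartan part $C^0:=C\cap U^0$: intersecting $\Delta(C)\subset C\otimes U$ with $U^0\otimes U^0$ and using $\Delta(K_\lambda)=K_\lambda\otimes K_\lambda$ shows $C^0$ is spanned by grouplikes, hence $C^0=\k[L]=T_L$ for a subgroup $L\subset Q$. Next, filter $U^{\leq0}$ by total $F$-degree; since each $\Delta(F_\beta)$ is filtered of degree one, $\Delta$ is filtered and $\gr C$ is a homogeneous RCS of $\gr U^{\leq0}$ with Cartan part $\k[L]$. Its $F$-support is then a subset of $\Phi^+$ which, by the graded negative analogue of Theorem \ref{HS09}, equals $\Phi^+(w)$ for a unique $w\in W$. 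Thus the leading terms of $C$ are exactly those of $U^-[w]\,T_L$, fixing both $w$ and $L$.

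It remains to recover $\phi$ and to establish rigidity. For each $\beta\in\Phi^+(w)$ choose a lift $\tilde F_\beta\in C$ with leading term $F_\beta$; the coideal condition $\Delta(\tilde F_\beta)\in C\otimes U$, expanded against the triangular decomposition, constrains the lower-order tail of $\tilde F_\beta$, and one shows it must collapse to a single grouplike term, giving $\tilde F_\beta=F_\beta+\phi(F_\beta)K_\beta^{-1}$ for scalars $\phi(F_\beta)$. Imposing that the $\tilde F_\beta$ satisfy the same straightening relations as the $F_\beta$, and that $C$ be stable under multiplication by $T_L$, then forces $\beta\mapsto\phi(F_\beta)$ to extend to an algebra character $\phi$ of $U^-[w]$ and forces $L\subset\supp(\phi)^\perp$, precisely reversing the normalization computation of the forward direction; this yields $C=L_\phi(U^-[w])\,T_L=U^-[w]_\phi T_L$. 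I expect the real difficulty to sit in this collapse step: showing simultaneously for all $\beta$ that the coideal condition leaves no room for higher $F$-degree or non-grouplike tails, so that the only coideal-preserving deformations of the homogeneous model $U^-[w]T_L$ are the winding shifts by a character.
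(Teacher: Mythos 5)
First, a point of order: the paper does not prove this statement at all. Theorem \ref{rcsinu+} is quoted as a preliminary from Heckenberger--Kolb [HK11b, Theorem 2.15], whose proof occupies a substantial part of that paper. So your proposal cannot be compared with ``the paper's own proof''; it has to stand alone, and in its second half it does not. The forward direction is the standard winding argument and is essentially sound, \emph{granted} the inclusion $\Delta(U^-[w])\subseteq U^-[w]\otimes U$. But your one-line justification (``the left tensor leg never leaves $U^-$'') only yields left legs in $U^-$, not in $U^-[w]$, and the stronger statement is a nontrivial theorem (Heckenberger--Schneider, De Concini--Kac--Procesi) which moreover depends delicately on the choice of root vectors: with the conventions of this paper for $U_q(\sl_3)$ one has $\Delta(F_{\alpha\beta})\ni(q^{-1}-q)F_\beta\otimes F_\alpha K_\beta^{-1}$ while $F_\beta\notin U^-[s_\alpha s_\beta]=\langle F_\alpha,F_{\alpha\beta}\rangle$, so for that choice the coideal property you assert is simply false. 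Note also that $L_\phi(F_\beta)=F_\beta+\phi(F_\beta)K_\beta^{-1}$ only for simple $\beta$; for non-simple root vectors $L_\phi$ produces additional cross terms, a point that becomes fatal below.

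The converse direction, where all the difficulty lies, has two genuine gaps. (i) You classify $\gr C$ by invoking ``the graded negative analogue of Theorem \ref{HS09}'', but that theorem classifies \emph{homogeneous} RCS, i.e.\ those containing all of $U^0$; your $\gr C$ contains only $\k[L]$, so what you need is precisely the homogeneous-leading-term case of the theorem being proved --- the argument is circular. (Relatedly, ``spanned by grouplikes'' only makes $C^0$ the monoid algebra of a \emph{submonoid} of $Q$, e.g.\ $\k[K_\alpha]$ without its inverse; passing to a subgroup does not come for free, which is exactly why the host paper assumes throughout that $C\cap U^0$ is a Hopf subalgebra.) (ii) The ``collapse'' step is not merely unproven but false as stated: for non-simple $\beta$ there is in general \emph{no} element of $C$ of the form $F_\beta+cK_\beta^{-1}$. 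Your own forward computation shows why: if $\phi(F_\alpha)\neq0$, then every element of $U^-[w]_\phi$ whose leading term is the non-simple root vector $F_{\beta\alpha}$ carries a cross term proportional to $\phi(F_\alpha)F_\beta K_\alpha^{-1}$, since no polynomial in the other generator $F_\alpha+\phi(F_\alpha)K_\alpha^{-1}$ can cancel it. This is visible in the paper's explicit generator lists for $U_q(\sl_3)$, which contain elements such as $F_{\beta\alpha}+(q-q^{-1})\lambda_\alpha^- F_\beta K_\alpha^{-1}$ rather than $F_{\beta\alpha}+cK_{\alpha+\beta}^{-1}$. Repairing this step --- showing that the admissible tails are exactly those produced by a character shift, and that the resulting functional is multiplicative on $U^-[w]$ --- is the actual technical content of Heckenberger--Kolb's proof, and your sketch offers no substitute for it.
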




\paragraph*{Orderings:}\label{ordnungen}
\index{Ordnung! auf $\Phi^+(w)$}
Let $w\in W$ be fixed with a chosen reduced expression. We consider $\mathbb{N}^{\Phi^+(w)}$ and denote $b\in \mathbb{N}^{\Phi^+(w)}$ by $b=(b_1,\ldots,b_{|\Phi^+(w)|})$ for $b_i\in \mathbb{N}$\\

There is a canonical projection $pr:\mathbb{Z}|\Phi^+(w)|\rightarrow \mathbb{Z}\Pi$ given by the addition on $\Phi^+$, i.e. $pr(b):=b_1\beta_1+\ldots+b_{|\Phi^+(w)|}\beta_{|\Phi^+(w)|}$ and we identify 
$\Phi^+(w)$ with the set of all unit vectors $\mathbb{N}\Phi^+(w)$, i.e. let $\beta_i\in \Phi^+(w)$ be the i-th unit vector in $\mathbb{N}\Phi^+(w)$
\begin{itemize}
\item We define, respecting the coice of a reduced expression of $w$, a total ordering $<$ on $\Phi^+(w)$ by $$\beta_i<\beta_j\in \Phi^+(w) \Leftrightarrow i<j$$
This ordering is convex, i.e. $\mu<\nu\in \Phi^+(w)$ if and only if $\mu<\mu+\nu<\nu$, for $\mu+\nu\in\Phi^+(w)$ see therefore \cite{PA} s.662.

\item In dependence of this total ordering there is another partial ordering on $\mathbb{N}\Phi^+(w)$, 
which is the lexicographical ordering $<_{lex}$ on $\Phi^+(w)$. With this ordering we can order elements
$a,b\in \mathbb{N}^{\Phi^+(w)}$ with $pr(a)=pr(b)$.
\end{itemize}

\begin{Lemma}[\cite{LS91} Prop. 5.5.2]\label{conchi}
Let $w\in W$ be a Weyl group element with reduced expression $w=s_{\alpha_1}\ldots s_{\alpha_{\ell(w)}}$ and \newline $\Phi^+(w)=\{\beta_1,\ldots \beta_{\ell(w)}\}$. 
 For the corresponding root vectors there is the following commutator rule. Let $\beta_i,\beta_j\in \Phi^+(w)$ with $i<j$, then:
 $$[E_{\beta_i},E_{\beta_j}]=\sum_{(a_{i+1},\ldots a_{j-1})\in \mathbb{N}_0^{j-i-1}}m_{(a_{i+1},\ldots a_{j-1})}E_{\beta_{i+1}}^{a_{i+1}}\ldots E_{\beta_{j-1}}^{a_{j-1}} $$
For coefficients $m_{(a_{i+1},\ldots a_{j-1})}\in k$.
 \end{Lemma}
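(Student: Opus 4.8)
The plan is to read the formula off the PBW theorem for $U^+$ together with the convexity of the chosen ordering on $\Phi^+(w)$. Since the comultiplication respects the $Q$-grading, both $E_{\beta_i}E_{\beta_j}$ and $E_{\beta_j}E_{\beta_i}$ are homogeneous of weight $\beta_i+\beta_j$, so the bracket lies in the weight-$(\beta_i+\beta_j)$ component of $U^+[w]$. By Lusztig's PBW theorem the ordered monomials $E_{\beta_1}^{a_1}\cdots E_{\beta_{\ell(w)}}^{a_{\ell(w)}}$ form a basis of $U^+[w]$, and the constraint $\sum_k a_k\beta_k=\beta_i+\beta_j$ has only finitely many solutions in $\N_0^{\ell(w)}$; hence $[E_{\beta_i},E_{\beta_j}]$ is a finite linear combination of such monomials. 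The entire content of the lemma is then to show that every contributing monomial has $a_k=0$ for $k\le i$ and for $k\ge j$, i.e.\ only the strictly intermediate root vectors occur.

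I would split off the two easy truncations first. For the lower indices $k<i$, apply the Lusztig automorphism $T_{s_{\alpha_1}\cdots s_{\alpha_{i-1}}}^{-1}$: it sends $E_{\beta_i}$ to the simple root vector $E_{\alpha_i}$ and carries each $E_{\beta_k}$ with $k\ge i$ to a root vector of the shifted reduced word $s_{\alpha_i}\cdots s_{\alpha_{\ell(w)}}$, all still in $U^+$. This reindexing preserves the form of the statement while normalizing $\beta_i$ to be the first root, so no index below $i$ can occur. For the upper indices $k>j$, use that the initial segment generated by $E_{\beta_1},\ldots,E_{\beta_j}$ is the subalgebra $U^+[s_{\alpha_1}\cdots s_{\alpha_j}]$ of Theorem \ref{HS09}; since $E_{\beta_i}$ and $E_{\beta_j}$ both lie in it, so does their bracket, and PBW uniqueness inside this subalgebra forces $a_k=0$ for all $k>j$.

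What remains, and what I expect to be the crux, is excluding the two boundary factors $E_{\beta_i}$ and $E_{\beta_j}$ themselves. Here the normalization of the bracket is essential: one must take the $q$-commutator $E_{\beta_i}E_{\beta_j}-q^{(\beta_i,\beta_j)}E_{\beta_j}E_{\beta_i}$, whose twist is dictated by the bilinear form $(,)$, since the ordinary commutator would retain a weight-$(\beta_i+\beta_j)$ boundary term. Indeed the skew-commutation of PBW root vectors gives $E_{\beta_j}E_{\beta_i}\equiv q^{-(\beta_i,\beta_j)}E_{\beta_i}E_{\beta_j}$ modulo strictly intermediate monomials, so in the $q$-commutator the leading ordered term $E_{\beta_i}E_{\beta_j}$ cancels exactly; convexity then guarantees that any surviving monomial of weight $\beta_i+\beta_j$ can only be built from $E_{\beta_{i+1}},\ldots,E_{\beta_{j-1}}$, since a factor $E_{\beta_i}$ or $E_{\beta_j}$ would force, via $\mu<\mu+\nu<\nu$, a complementary root outside the open interval already excluded above. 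I would make this rigorous by induction on $j-i$: the base case $j=i+1$ is the assertion that consecutive root vectors $q$-commute, which holds because convexity leaves no root strictly between $\beta_i$ and $\beta_{i+1}$ (so $\beta_i+\beta_{i+1}$ is not a root and the weight-$(\beta_i+\beta_{i+1})$ space is one-dimensional); the inductive step uses the twisted-derivation property of $\mathrm{ad}_q(E_{\beta_i})$ coming from $\Delta(E_{\beta_i})=E_{\beta_i}\otimes1+K_{\beta_i}\otimes E_{\beta_i}$ to rewrite $[E_{\beta_i},E_{\beta_j}]$ through brackets of strictly smaller separation. The delicate point throughout is exactly this boundary cancellation: the grading and subalgebra arguments confine the bracket to the closed interval $[i,j]$ almost for free, but peeling off the two endpoints is precisely the effect the $q$-commutator is engineered to produce, and checking that the intermediate subalgebra stays stable under the operators used is where the real work lies.
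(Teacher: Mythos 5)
The paper itself contains no proof of this lemma---it is quoted verbatim from \cite{LS91}, Prop.~5.5.2---so your argument has to stand entirely on its own. Its skeleton is sound: the truncation to $U^+[s_{\alpha_1}\cdots s_{\alpha_j}]$ correctly kills all indices $k>j$, and conjugating by $T_{s_{\alpha_1}\cdots s_{\alpha_{i-1}}}^{-1}$ legitimately reduces to the case where $\beta_i$ is simple (the Weyl group preserves $(,)$, so the $q$-commutator maps to the $q$-commutator of the shifted word). One can even push the weight bookkeeping to show that the bracket equals $c\,E_{\beta_i}E_{\beta_j}$ plus interior monomials, although for that you need more than the pairwise convexity you cite: you need that no $\beta_m$ is a nonnegative integer combination of roots lying strictly on one side of it in the order, which is proved by applying $(s_{\alpha_1}\cdots s_{\alpha_{m-1}})^{-1}$ (resp. $s_{\alpha_1}$) and comparing signs of the resulting roots.

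The genuine gap is exactly at the step you yourself call the crux, namely showing $c=0$. Your sentence ``the skew-commutation of PBW root vectors gives $E_{\beta_j}E_{\beta_i}\equiv q^{-(\beta_i,\beta_j)}E_{\beta_i}E_{\beta_j}$ modulo strictly intermediate monomials'' is word-for-word the lemma being proved, so the argument is circular, and the fallback induction does not repair it. In the base case $j=i+1$, one-dimensionality of the weight space yields only $E_{\beta_{i+1}}E_{\beta_i}=c'\,E_{\beta_i}E_{\beta_{i+1}}$ for \emph{some} scalar $c'$; the entire content of the base case is the exact value $c'=q^{-(\beta_i,\beta_{i+1})}$, and nothing in your outline computes it. (After your reduction it can be computed, but only with input you never invoke: for $\beta_i=\alpha_1$ and $E_{\beta_{i+1}}=T_{\alpha_1}(E_{\alpha_2})$ apply $T_{\alpha_1}^{-1}$, use $T_{\alpha_1}^{-1}(E_{\alpha_1})=-K_{\alpha_1}^{-1}F_{\alpha_1}$, the cross relation $[F_{\alpha_1},E_{\alpha_2}]=0$, and $(\alpha_1,s_{\alpha_1}\alpha_2)=-(\alpha_1,\alpha_2)$.) The inductive step is worse off, because it rests on a false formula: $\Delta(E_{\beta_i})=E_{\beta_i}\otimes 1+K_{\beta_i}\otimes E_{\beta_i}$ holds only for \emph{simple} root vectors---compare the paper's own $\Delta(E_{\alpha\beta})=E_{\alpha\beta}\otimes 1+K_{\alpha+\beta}\otimes E_{\alpha\beta}+(1-q^{-2})E_{\alpha}K_{\beta}\otimes E_{\beta}$---so $\mathrm{ad}_q(E_{\beta_j})$ is not the twisted derivation you need, and the claimed recursion to ``smaller separation'' is never actually defined. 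A complete proof, as in \cite{LS91}, proceeds by induction on $\ell(w)$ and genuinely requires the skew derivations $r_\alpha,r'_\alpha$, the commutation formula expressing $[x,F_\alpha]$ for $x\in U^+_\mu$ through them, and the identification of $U^+\cap T_\alpha(U^+)$ with the kernel of such a derivation; this machinery is what converts the bracket with $E_{\alpha_1}$ into an element demonstrably spanned by interior monomials, and it is absent from your proposal.
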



\section{Constructing a system of generators}\label{main}
\subsection{Summary}

Goal of this part is to introduce a clever choice of an algebra generating system for a right coideal subalgebra $C$ of $U=U_q(\g)$.
That is a set $Z$ of elements in $C$, which generate as an algebra the right coideal subalgebra $C=\langle Z\rangle$ .\\

We restrict to the case of a right coideal subalgebra $C$, for which
$$C^0:=C\cap U^0$$ 

is a sub Hopf algebra.
In particular we show, that one can choose a generating system, consisting of elements whose leading terms lie in $U^{\geq0}$ resp. $U^{\leq0}$.
We conjecture that one can choose a generating system which consists of elements of the following form: 
\begin{align}
\lambda_EE^{\phi_E}_{\mu}+\lambda_F K^{-1}_{\mu-\nu}F^{\phi_F}_{\nu}+\lambda_KK^{-1}_{\mu} \label{ziel}
\end{align}
for root vectors $E_{\mu}$ resp. $F_{\nu}$ in $\psi(U^+[w_1])$ resp. $U^-[w_2]$ respecting a specieal reduced expression of the elements $w_1$ and $w_2\in W$ 
with characters $\phi_E,\phi_F$ on the subalgebras $\psi(U^+[w_1])T_L$ resp. $U^-[w_2]T_L$ for some suitable $L\subset Q$, where $E^{\phi_E}_{\mu}:=(\phi_E\otimes id)\Delta(\psi(E_{\mu}))$ 
and $F^{\phi_F}_{\nu}:=(\phi_F\otimes id)\Delta(F_{\mu})$.\\

We denote the $U^0$-\emph{part} of $C$ by $T_L:=C\cap U^0$ for $L\subset Q$ a subgroup. 
First we choose a set of $\mathrm{ad}(T_L)$-weight vectors, generating $C$ as algebra, and show, 
that we can choose elements in $U^+_{\mu}U^0_{\nu}U^-_{\gamma}$ with $\nu-\gamma$ constant. 

Then we prove, that we can choose elements with exactly one leading term in $U^{\geq0}$ or $U^{\leq0}$ respectively.
It is even possible to choose root vectors as leading terms respecting a Weyl group element $w\in W$ with a special reduced expression. 

Finally, we provide a proof of the assertion that a choice of generating elements in $(U^{\geq0}+U^{\leq0})\cap C$ is possible 
and prove some further properties,
which hold for $\phi^E,\phi^F$ in (\ref{ziel}).\\

First we prove statements, which hold for any choice of a reduced expression of the maximal element $w_0\in W$.
Fix therefore in the following a reduced expression of the maximal element $w_0$ and consider elements of $U$ in the PBW-basis consisting of root vectors in $U^+[w_0]U^0U^-[w_0]$.
 
For elements in $\mu\in Q$ we use the partial ordering $\prec$ on $Q$ and on elements in $\overline{\mu}\in\mathbb{N}^{\Phi^+}$ the ordering 
$<_{lex}$. We also use the projection
$\pr:\mathbb{Z}\Phi^+\rightarrow \mathbb{Z}\Pi$ 
given by addition in $Q=\mathbb{Z}\Pi$, i.e. $\pr(\overline{\mu}):=\mu_1\beta_1+\ldots+\mu_{|\Phi^+|}\beta_{|\Phi^+|}$.
For every $\overline{\mu}$ there is a unique PBW-Monomial $E_{\overline{\mu}}\in U^+$ (the same holds in $U^-$) given by:
$$E_{\overline{\mu}}:=E_{\beta_{|\Phi^+|}}^{\mu_{|\Phi^+|}}\cdots E_{\beta_1}^{\mu_1}$$

The same holds in $U^-$. We need some further technical notations:
\begin{definition}
 For $\mu\in Q_+,\nu \in Q,\gamma\in Q_+$ we denote the components of an element $X\in U$ 
 in $Q^3$-degree in the triangular decomposition $U^+_\mu U^0_\nu U^-_{-\gamma}$ by $\pr_{(\mu,\nu,\gamma)}(X)$, i.e.
 $$\pr_{(\mu,\nu,\gamma)}:\; U\to U^+_\mu K_\nu U^-_{-\gamma}$$
 $$X_{\mu,\nu,\gamma}:=\pr_{(\mu,\nu,\gamma)}(X)=
\sum_{\substack{\overline{\mu}\in\mathbb{N}^{\Phi^+}\text{ with }pr(\overline{\mu})=\mu\\ \overline{\gamma}\in\mathbb{N}^{\Phi^+}\text{ with }pr(\overline{\gamma})=\gamma}}\limits 
c_{\overline{\mu},\nu,\overline{\gamma}}E_{\overline{\mu}}K_{\nu}F_{\overline{\gamma}}
 $$
 This projection in the first two components $pr_{(\_,\_,0)}$ corresponds to $\pr_{\mu,\nu}$ which is defined in $U^{\geq0}$ as unique $Q^2$-graded projection $pr_{(\alpha,\beta)}:U^{\geq0}\rightarrow U_{\alpha}^+K_{\beta}$. 
\end{definition}
$U$ is not a $Q^3$-graded Hopfalgebra, nevertheless we know:
\begin{align}
\begin{split}
\Delta(X_{\mu,\nu,\gamma})\subset & X_{\mu,\nu,\gamma}\otimes K_{\nu-\gamma}+K_{\mu+\nu}\otimes X_{\mu,\nu,\gamma}+\\&\sum_{\substack{(\mu',\gamma')\\ \text{ with }(0,0)\prec(\mu',\gamma')\prec (\mu,\gamma)}}U^+_{\mu'}K_{\nu+\mu-\mu'}U^-_{-\gamma'}\otimes U^+_{\mu-\mu'}U^0_{\nu-\gamma'}U^-_{-\gamma+\gamma'}
\end{split}
\label{coprodukt}
\end{align}
\begin{definition}
The \emph{$U^{\geq0}$-part} resp. the \emph{$U^{\leq0}$-part} resp. the \emph{mixed part} of an element $X$ are given by the following sums
$$
\sum_{\mu\in Q_+,\nu\in Q}\limits X_{\mu,\nu,0},\qquad 
\sum_{\nu\in Q,\gamma\in Q_+}\limits X_{0,\nu,\gamma},\qquad
\sum_{\mu\neq0,\nu\in Q,\gamma\neq0\in Q_+}\limits X_{\mu,\nu,\gamma}
$$
\end{definition}
\begin{definition}
 For an element $X_{\mu,\nu,\gamma}=
\sum_{\substack{\overline{\mu}\in\mathbb{N}^{\Phi^+}\text{ with }pr(\overline{\mu})=\mu\\ \overline{\gamma}\in\mathbb{N}^{\Phi^+}\text{ with }pr(\overline{\gamma})=\gamma}}\limits 
c_{\overline{\mu},\nu,\overline{\gamma}}E_{\overline{\mu}}K_{\nu}F_{\overline{\gamma}}
 \in U$ and given $\overline{\mu},\overline{\gamma}\in \mathbb{N}^{\Phi^+},\;\nu\in Q$ we denote the coefficient of the PBW-basis elements $E_{\overline{\mu}}K_\nu F_{\overline{\gamma}}$ in the PBW-Basis by
 $$c_{\overline{\mu},\nu,\overline{\gamma}}(X)\in k$$
\end{definition}


\subsection{Generating systems with $T_L$-stable root vectos}

\begin{lemma}
\label{konstant}
Let $C$ be an RCS in $U$. We can choose a generating system $Z$ of $C$, such that for all elements 
$X=\sum_{\nu\in Q,\mu,\gamma\in Q_+}\limits X_{\mu,\nu,\gamma}\in Z$ the difference $\nu-\gamma$ is constant.
\end{lemma}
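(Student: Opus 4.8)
The plan is to exhibit a canonical $Q$-grading on $U$ arising from the coideal structure, with respect to which $C$ is a graded subspace; a homogeneous spanning set of $C$ then serves as the required generating system. Concretely, let $\pi\colon U\to U^0$ be the projection onto the $U^0$-summand of the triangular decomposition $U=\bigoplus_{\mu,\gamma\in Q_+}U^+_\mu K_\nu U^-_{-\gamma}$, so that $\pi(E_{\overline{\mu}}K_\nu F_{\overline{\gamma}})=K_\nu$ when $\mu=\gamma=0$ and $\pi(E_{\overline{\mu}}K_\nu F_{\overline{\gamma}})=0$ otherwise. I consider the linear map
\[
\rho:=(\id\otimes\pi)\circ\Delta\colon U\longrightarrow U\otimes U^0 ,
\]
and for $\delta\in Q$ I write $X^{(\delta)}:=\sum_{\nu-\gamma=\delta}X_{\mu,\nu,\gamma}$ for the part of $X$ collecting all triangular components with $\nu-\gamma=\delta$.

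The key computation is that $\rho$ reads off exactly this $\delta$-grading. Applying $\id\otimes\pi$ to the coproduct formula (\ref{coprodukt}), the first summand contributes $X_{\mu,\nu,\gamma}\otimes\pi(K_{\nu-\gamma})=X_{\mu,\nu,\gamma}\otimes K_{\nu-\gamma}$. The term $K_{\mu+\nu}\otimes X_{\mu,\nu,\gamma}$ survives only when $X_{\mu,\nu,\gamma}$ has a nonzero $U^0$-component, i.e. when $(\mu,\gamma)=(0,0)$, in which case it already coincides with the first summand. In every summand of the remaining sum the right tensor factor lies in $U^+_{\mu-\mu'}U^0 U^-_{-\gamma+\gamma'}$ with $(\mu',\gamma')\prec(\mu,\gamma)$ strictly, so $(\mu-\mu',\gamma-\gamma')\neq(0,0)$ and $\pi$ annihilates it. Hence $\rho(X_{\mu,\nu,\gamma})=X_{\mu,\nu,\gamma}\otimes K_{\nu-\gamma}$, and summing over the triangular components of an arbitrary $X\in U$ gives
\[
\rho(X)=\sum_{\delta\in Q}X^{(\delta)}\otimes K_\delta .
\]

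Now I use that $C$ is a right coideal: since $\Delta(C)\subseteq C\otimes U$, we get $\rho(C)=(\id\otimes\pi)\Delta(C)\subseteq C\otimes U^0$. Fix $X\in C$. As the grouplike elements $\{K_\delta\mid\delta\in Q\}$ form a basis of $U^0=\k[Q]$, the containment $\sum_\delta X^{(\delta)}\otimes K_\delta\in C\otimes U^0$ forces $X^{(\delta)}\in C$ for every $\delta$. Therefore $C=\bigoplus_{\delta\in Q}\bigl(C\cap U^{[\delta]}\bigr)$, where $U^{[\delta]}$ denotes the subspace spanned by all triangular pieces $U^+_\mu K_\nu U^-_{-\gamma}$ with $\nu-\gamma=\delta$. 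Choosing a basis of each graded piece $C\cap U^{[\delta]}$ and letting $Z$ be their union yields a spanning set of $C$, in particular an algebra generating system, each of whose elements has constant $\nu-\gamma$, as required.

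I expect the only genuine work to be the bookkeeping in the middle step: one must verify carefully that, apart from the principal term, every contribution to (\ref{coprodukt}) has a nontrivial $U^+$- or $U^-$-part in its right tensor leg and is thus killed by $\pi$. I emphasize that this grading is \emph{not} multiplicative --- the relation $E_\alpha F_\alpha-F_\alpha E_\alpha=(K_\alpha-K_\alpha^{-1})/(q-q^{-1})$ already mixes the degrees $\pm\alpha$ --- so the argument must not rely on homogeneity of products. It is therefore essential that the generating system be extracted from a homogeneous \emph{spanning} set of $C$, rather than obtained by decomposing the generators of some previously fixed system.
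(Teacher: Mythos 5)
Your proof is correct and, at its core, it is the same argument as the paper's: the paper fixes, for each $\eta\in Q$, the linear form $\varphi_\eta\in U^*$ dual to the PBW basis element $K_\eta$ and uses formula (\ref{coprodukt}) together with the right coideal property to conclude $X^{(\eta)}=(\id\otimes\varphi_\eta)\Delta(X)\in C$. Your map $\rho=(\id\otimes\pi)\circ\Delta$ merely bundles all these functionals into the single projection $\pi$ onto $U^0$, with coefficient extraction against the basis $\{K_\delta\mid\delta\in Q\}$ recovering the individual $\varphi_\eta$; the bookkeeping you flag as the ``only genuine work'' is exactly the bookkeeping the paper does.

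One correction to your closing claim: it is \emph{not} essential to extract the generators from a homogeneous spanning set of all of $C$, and the non-multiplicativity of the $\delta$-grading is a red herring at that point. The paper concludes precisely in the way you say one must not: it takes an arbitrary generating system $Z$ and replaces each $X\in Z$ by its finitely many components $X^{(\eta)}$. This is legitimate because every $X^{(\eta)}$ lies in $C$ (that is what the coideal computation provides) and $X=\sum_\eta X^{(\eta)}$, so the subalgebra generated by the new set contains $\langle Z\rangle=C$ and is contained in $C$, hence equals $C$. Homogeneity of products is never invoked in that argument; multiplicativity of the grading would only be needed if one wanted the \emph{conclusion} that products of the new generators stay homogeneous, which the lemma does not claim. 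Your route via a homogeneous spanning set is also valid, just less economical, since it replaces a possibly small generating system by a basis of $C$.
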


\begin{proof}
We use the fact, that $C$ is an RCS, the formula (\ref{coprodukt}) and the PBW-basis: 
Let for any $\eta\in Q$ the Linear form $\varphi\in U^*$ be given by $\varphi(K_\eta)=1$ and $\varphi(x)=0$ for any other PBW-Basis element. Then
for any element $X\in C$ the following elements $X^{(\eta)}$ lie again in the right coideal subalgebra $C$: 
$$X^{(\eta)}:=(id\otimes \varphi)\Delta(X)
=\sum_{\substack{\mu,\gamma\in Q_+,\nu\in Q\\ \text{with } \nu-\gamma=\eta}} X_{\mu,\nu,\gamma}
$$
On the other hand $X=\sum_{\eta} X^{(\eta)}$, thus we can refine a given generating system $Z$, by replacing all generators 
$X$ by the set of $X^{(\eta)}$.
\end{proof}
\begin{definition}
A \emph{weight vector} \index{Gewichtsvektor} resp. $T_L= U^0\cap C$ is an element $x\in C$, such that there is a
$\mu\in Q$, with $\mathrm{ad}(K_{\nu})x=q^{(\mu,\nu)}x \forall\nu\in L$.

\end{definition}
The adjoint action of elements in $T_L$ on $C$ is defined, because $T_L$ is a sub Hopfalgebra.
In particular we can choose a generating system consisting of weight vectors of $T_L$, as $C$ is $\mathrm{ad}T_L$-stable and as $adU^0$ is diagonalizable on $U$.
 

\subsection{Leading terms of elements in $C$}
\begin{definition}\label{def_Leitterm}
Given an element $X$, we define the set of all \emph{leading terms}\index{Leitterm} as follows: \newline
First we define the set of all maximal $U^+$-degrees:
$$\mathcal{E}(X):=\mathrm{max}_{\prec}\{ 0\neq\mu\in Q_+ \mid\exists \nu\in Q,\gamma\in Q_+\text{ with }X_{\mu,\nu,\gamma}\neq0\}$$

For any $\mu\in \mathcal{E}(X)$ the corresponding \emph{E-leading term} is
$$L_{\mu}(X):=\sum_{\nu\in Q,\gamma\in Q_+}\limits X_{\mu,\nu,\gamma}$$ 
Similarly we can define \emph{F-leading terms}.
\end{definition}
\begin{remark}
If an element $X\in C$ has no E-leading terms, i.e. $\mathcal{E}(X)=\emptyset$, then $X\in U^{\leq0}$.
\end{remark}
\begin{theorem}
\label{Leitterme}
Let $C$ be an RCS with the property $C^0:=C\cap U^0$ is a sub Hopfalgebra. 
We can choose a generating system of $C$ consisting of elements whose E-leading terms lie in $U^{\geq0}$.
\end{theorem}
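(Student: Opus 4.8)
The plan is to build the desired generating system out of an arbitrary one by repeatedly stripping the $F$-part off the top $U^+$-degree, using the coideal property as the engine. First I would normalise the generators: by Lemma \ref{konstant} and the weight-vector remark following it I may assume every generator $X$ is an $\mathrm{ad}(T_L)$-weight vector with $\nu-\gamma=\eta$ constant on its support, so that its top $E$-part reads $L_\mu(X)=\sum_\gamma X_{\mu,\eta+\gamma,\gamma}$. I would also record that $C\cap U^{\leq0}$ is itself an RCS of $U^{\leq0}=U^-U^0$: since $U^{\leq0}$ is a Hopf subalgebra, $\Delta(C\cap U^{\leq0})\subset(C\otimes U)\cap(U^{\leq0}\otimes U^{\leq0})\subset(C\cap U^{\leq0})\otimes U^{\leq0}$, so Theorem \ref{rcsinu+} describes it; every generator taken from $C\cap U^{\leq0}$ has $\mathcal{E}=\emptyset$ and hence vacuously satisfies the conclusion.

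The key tool is an extraction operator refining the one in Lemma \ref{konstant}. For a PBW monomial $F_{\bar d}$ of $U^-$-degree $d$, let $\psi_{\bar d}\in U^*$ read off the coefficient of the single PBW element $K_{\eta+d}F_{\bar d}$ and vanish on every other PBW basis element, and put $\pi_{\bar d}:=(\id\otimes\psi_{\bar d})\Delta$. As in Lemma \ref{konstant}, $\Delta(C)\subset C\otimes U$ gives $\pi_{\bar d}(C)\subset C$. Evaluating (\ref{coprodukt}) on a homogeneous piece $X_{a,b,c}$ with $b-c=\eta$, the only surviving contribution comes from the off-diagonal sum at inner index $(\mu',\gamma')=(a,\,c-d)$: there the first tensor factor lies in $U^+_aK_bU^-_{-(c-d)}$ while the second equals $K_{\eta+d}F_{\bar d}$. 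Hence $\pi_{\bar d}$ keeps the $U^+$-degree $a$ fixed and drops the $U^-$-degree from $c$ to $c-d$, the splitting sending all $E$-factors left and $F_{\bar d}$ right contributing a nonzero power of $q$. Taking $\bar d=\bar\gamma_0$ to be the lexicographically largest monomial of maximal $U^-$-degree $\gamma_0$ occurring in $L_\mu(X)$, the element $Y:=\pi_{\bar\gamma_0}(X)\in C$ keeps the top $U^+$-degree $\mu$, and its top $E$-part is nonzero and lies in $U^{\geq0}$, since maximality of $\gamma_0$ forces the only surviving piece in degree $\mu$ to have $U^-$-degree $\gamma_0-\gamma_0=0$.

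With this in hand I would run a downward induction. Well-order the elements of $C$ by the pair consisting of the top $U^+$-degree $\mu$ (with respect to $\prec$, refined to a total order) and then the maximal $U^-$-degree occurring in $L_\mu$. Given $X$ whose top $E$-part is not yet in $U^{\geq0}$, the element $Y$ above carries the $E$-content of $L_\mu(X)$; I would then cancel the remaining $F$-companions in degree $\mu$ by subtracting suitable products $Y\cdot V$ with $V\in C\cap U^{\leq0}$, chosen so that multiplication by the $U^{\leq0}$-factor reinstalls exactly the missing $K F_{\bar\gamma}$-tails. Since such a product again has top $U^+$-degree $\mu$ with top $E$-part $(\text{top }E\text{-part of }Y)\cdot(\text{leading term of }V)$, matching $L_\mu(X)$ term by term makes $X-\sum Y V$ drop strictly in the chosen invariant, and the induction closes: the collected elements with $U^{\geq0}$-leading top part, together with the generators of $C\cap U^{\leq0}$, generate $C$.

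The main obstacle is exactly this cancellation step. One must show that $C$ really contains enough low elements $V\in C\cap U^{\leq0}$, with the prescribed $U^-$-leading terms, to reconstitute every $F$-companion appearing in $L_\mu(X)$; here the coideal hypothesis is indispensable, since it is the off-diagonal part of (\ref{coprodukt}), landing in $C\otimes U$, that forces these lower elements into $C$, while the classification of $C\cap U^{\leq0}$ via Theorem \ref{rcsinu+} pins down which $U^-$-leading terms are actually available. Secondary technical points are the reordering of the products $Y\cdot V$ into PBW form without disturbing the top $U^+$-degree, controlled by the Levendorskii--Soibelman rule (Lemma \ref{conchi}), and the simultaneous treatment of several incomparable maximal degrees in $\mathcal{E}(X)$.
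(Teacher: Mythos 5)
Your extraction operator $\pi_{\bar d}=(\id\otimes\psi_{\bar d})\Delta$ is sound, and it is in essence the same device the paper uses: applied to $X$ with $\bar d=\bar\gamma_{max}$ it produces the paper's element $E_{\overline{\mu}}K_{\eta+\gamma_{max}}+(rest)\in C$, whose top part lies in $U^{\geq0}$. Up to that point your argument and the paper's agree. The problem is that everything after this -- the cancellation of the $F$-companions of $L_\mu(X)$ -- is exactly the heart of the proof, and you leave it open: you yourself call it ``the main obstacle'' and only gesture at how the elements $V\in C\cap U^{\leq0}$ might be found. Appealing to Theorem \ref{rcsinu+} cannot close this gap: knowing that $C\cap U^{\leq0}$ has the abstract form $U^-[w]_\phi T_L$ tells you nothing about whether the \emph{specific} tails $\sum_{\overline{\gamma}}c_{\overline{\mu},\eta+\gamma,\overline{\gamma}}K_{?}F_{\overline{\gamma}}$, with the coefficients of $X$ itself, are realized in $C\cap U^{\leq0}$. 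The paper resolves this not by classification but by a second, dual coproduct extraction from $X$: projecting the \emph{second} tensor factor of $\Delta(X)$ onto the monomial $E_{\overline{\mu}}$ (i.e.\ the case $\mu'=0$ in (\ref{coprodukt}), so the first factor lands in $K_{\nu+\mu}U^-$), which yields precisely
\begin{equation*}
\sum_{\overline{\gamma}}c_{\overline{\mu},\eta+\gamma,\overline{\gamma}}\,K_{\eta+\gamma+\mu}F_{\overline{\gamma}}\in C ,
\end{equation*}
the $F$-tail of the top term of $X$ with the correct coefficients, sitting in $C\cap U^{\leq0}$. Your framework could accommodate this (it is the mirror image of your $\pi_{\bar d}$), but as written your proposal never constructs these elements, so the induction cannot be run.

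A second, related omission: you never use the hypothesis that $C^0=C\cap U^0$ is a sub-Hopf-algebra, which is essential and cannot be dispensed with. The extracted pieces carry mismatched $K$-shifts ($K_{\eta+\gamma_{max}}$ on the $E$-piece, $K_{\eta+\gamma+\mu}$ on the $F$-tails), and to recombine them into the top term $X_{\overline{\mu}}$ one must multiply by $K_{\eta+\gamma_{max}+\mu}^{-1}$; the paper gets $K_{\eta+\gamma_{max}+\mu}\in C$ from the coproduct and then inverts it using precisely the sub-Hopf-algebra hypothesis. Without this normalization your products $Y\cdot V$ have the wrong $U^0$-factors and cannot cancel $L_\mu(X)$ term by term. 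Finally, your claim that $X-\sum YV$ ``drops strictly in the chosen invariant'' is too quick: the subtraction can reproduce the same maximal monomial data, and the paper has to treat the stubborn case $\mathcal{M}(X-z)=\mathcal{M}(X)$ by iterating the construction and arguing termination separately.
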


\begin{proof}
For elements $X$ in $C$ let the set of PBW-Monomials of $E$-leading terms with non trivial $F$-part be defined as:
$$\mathcal{M}(X):=\left\{\overline{\mu}\in \mathbb{N}^{\Phi^+ }
\mid\exists \nu\in Q,0\neq \overline{\gamma}\in\mathbb{N}^{\Phi^+}
\text{ such that }c_{\overline{\mu},\nu,\overline{\gamma}}(X)\neq0
\text{ und }\pr(\overline{\mu})\in\mathcal{E}(X)\right\}$$
Similarly we define for a set $Z$ of elements $X\in C$ the set
$\mathcal{M}(Z):=\bigcup_{X\in Z} \mathcal{M}(X)$.
For comparing elements and generating systems respecting their corresponding set $\mathcal{M}$, we define a partial ordering $\leq$ on subsets of $\mathbb{N}^{\Phi^+ }$ as follows:

$\mathcal{N}\leq \mathcal{M}$ if and only if there exists for all $\overline{\nu}\in \mathcal{N}$ a $\overline{\mu}\in \mathcal{M}$ , such that either
 $\pr(\overline{\nu})\prec \pr(\overline{\mu})$ (i.e. the partial ordering on $Q$) or 
if $\pr(\overline{\nu})=\pr(\overline{\mu})$ then $\overline{\nu}\leq_{lex} \overline{\mu}$ (i.e. the lexicographical ordering of PBW-Monomials). We call $\mathcal{N}<\mathcal{M}$ strictly smaller, if there exists at least one $\overline{\mu}\in \mathcal{M}$ which is not less than or equal (again respecting partial and then lexicographical ordering) to {\it all} elements $\overline{\nu}\in \mathcal{N}$. \\

The goal of the Theoremis to prove, that there exists a generating system $Z$ of $C$ with $\mathcal{M}(Z)=\emptyset$.
In the following we give an algorithm, to construct for an element $X\in C$ with $\mathcal{M}(X)\neq \emptyset$ 
finitely many new elements $\mathcal{Y}=\{Y_1,\ldots Y_n\}\subset C$ with 
$\mathcal{M}(\{Y_1,\ldots Y_n\})<\mathcal{M}(X)$ and $X\in \langle \mathcal{Y}\rangle$, i.e. $X$ lies in the subalgebra generated by the elements $Y_i$. 
The sets $\mathcal{M}(X)$ for single elements $X$ or finite sets of elements are finite. 

By the definition of $\mathcal{N}<\mathcal{M}$, starting with a set $\mathcal{M}(X)$, in every iteration step either one of the maximal $Q$-degrees of the set decreases or (for similar $Q$-degree) in one $Q$-degree the lexicographical maximal degree decreases. 

This proves, that, starting with a $\mathcal{M}(X)$ we can construct in finitely many steps a finite set $\mathcal{Y}\subset C$ , for which $\mathcal{M}(\mathcal{Y})=\emptyset$ and $X\in\langle\mathcal{Y}\rangle$. Finally we conclude, that even for an a-priori infinite generating system $Z$ of $C$ we can construct the desired alternative generating system $Z'$ with $\mathcal{M}(Z')=\emptyset$ as union of all $\mathcal{Y}$ for all $X\in Z$; thus the claim follows.\\

Now we give the announced algorithm, to construct for an element $X\in C$ with $\mathcal{M}(X)\neq \emptyset$ 
finitely many elements $\mathcal{Y}=\{Y_1,\ldots Y_n\}\subset C$ with $\mathcal{M}(\{Y_1,\ldots Y_n\})<\mathcal{M}(X)$
and $X\in \langle \mathcal{Y}\rangle$:\\

Let $\overline{\mu_{max}}\in \mathcal{M}(X)$ be a maximal element (maximal respecting the partial ordering $\prec$ and for all elements with the same $Q$-degree 
respecting the lexicographical ordering). We consider the term
$X_{\overline{\mu_{max}}}=
E_{\overline{\mu_{max}}}
\sum_{\nu\in Q,\overline{\gamma}\in \mathbb{N}^{\Phi^+}}
\limits c_{\overline{\mu_{max}},\nu,\overline{\gamma}}
K_{\nu}F_{\overline{\gamma}}$. 

Due to Lemma \ref{konstant} we can choose $X$ in a way, that $\nu-\gamma=\eta$ holds for a constant $\eta\in Q$. 
Thus
$X_{\overline{\mu_{max}}}=E_{\overline{\mu_{max}}}\sum_{\overline{\gamma}\in \mathbb{N}^{\Phi^+}}\limits c_{\overline{\mu_{max}}, \eta+\pr(\overline{\gamma}),\overline{\gamma}}K_{\eta+\pr(\overline{\gamma})}F_{\overline{\gamma}}$.\\

In the following let $\gamma=\pr(\overline{\gamma})$, $\overline{\mu}:=\overline{\mu_{max}}$ and $\mu=\pr(\overline{\mu})$. We can use the comultiplication to prove with the right coideal property of $C$ for all $\overline{\gamma}\in \mathbb{N}^{\Phi^+}$
\begin{equation}\sum_{\overline{\gamma}}c_{\overline{\mu}, \eta+\gamma,\overline{\gamma}}K_{\eta+\gamma+\mu}F_{\overline{\gamma}}\in C\end{equation}

It is for all $\gamma$ in $Q_+$:
$$\mathcal{M}(K_{\eta+\gamma+\mu})=\emptyset=\mathcal{M}(K_{\eta+\gamma+\mu}F_{\overline{\gamma}})$$

Let $\gamma_{max}=\gamma_{max}(X,\overline{\mu_{max}})$ be maximal in $ Q_+$ respecting $\prec$ such that there exists a $\overline{\gamma_{max}}$   with $\pr(\overline{\gamma_{max}})=\gamma_{max}$ such that $c_{\overline{\mu_{max}}, \eta+\gamma,\overline{\gamma}}\neq0$. There exists such a $\gamma_{max}$, because $\overline{\mu_{max}}\in \mathcal{M}(X)$.
Consider now the coproduct of $X$: 

$$\Delta(X)\subset \sum_{\gamma\in Q_+}\limits K_{\eta+\gamma_{max}+\mu}\otimes L_{\mu}+U\otimes U_{x\prec\mu}^+U^0U^-+\text{ terms in }U^0(U^-\cap ker\epsilon)\otimes U_{\mu}^+U^0U^-$$ 
thus also 
$K_{\eta+\gamma_{max}+\mu}$ lies in $C$ and, as $C^0$ is a sub Hopfalgebra, in particular also
\begin{align}
K_{\eta+\gamma_{max}+\mu}^{-1}\in C\end{align}

On the other hand we get from the right coideal property for this $\gamma_{max}$
\begin{align}
E_{\overline{\mu}}K_{\eta+\gamma_{max}}+(rest)\in C\end{align}
with $(rest):=\sum_{\mu'\prec\mu\in Q_+,\gamma\in Q_+}\limits(rest)_{\mu', \eta+\gamma,\gamma}+\sum_{\overline{\gamma},\overline{\mu'}<\overline{\mu},\mu'=\mu}c_{\overline{\mu}, \eta+\gamma,\overline{\gamma}}^{(rest)}E_{\overline{\mu'}}K_{ \eta+\gamma}F_{\overline{\gamma}}$. 
Thus
$$\mathcal{M}(E_{\overline{\mu}}K_{\eta+\gamma_{max}}+(rest))<\mathcal{M}(X)$$ 
as in particular $\overline{\mu}_{max}\notin \mathcal{M}(E_{\overline{\mu}}K_{\eta+\gamma_{max}}+(rest))$. 

Summing over the different $\overline{\gamma}$ and considering the product of these elements yields:
\begin{align*}
z:=&\underbrace{(E_{\overline{\mu}}K_{\eta+\gamma_{max}}+(rest)}_{(6)}\underbrace{K_{\eta+\gamma_{max}+\mu}^{-1}}_{(5)}\underbrace{\sum_{\overline{\gamma}}\limits c_{\overline{\mu}, \eta+\gamma,\overline{\gamma}}K_{\eta+\gamma+\mu}F_{\overline{\gamma}}}_{(4)}\\
&=X_{\overline{\mu}}+\sum_{\overline{\gamma}}\limits (rest) c_{\overline{\mu}, \eta+\gamma,\overline{\gamma}}K_{-\gamma_{max}+\gamma}^{-1}F_{\overline{\gamma}}\in C
\end{align*} 

Thus for $z$ holds $\mathcal{M}(z)\leq\mathcal{M}(X)$  and $\mathcal{M}(X-z)\leq\mathcal{M}(X)$ .\\ 

Obviously we can generate $X$ by the elements $z$ and $X-z$, where $z$ can be generated by elements $E_{\overline{\mu}}K_{\eta+\gamma_{max}}+(rest)$, $K_{\eta+\gamma_{max}+\mu}$ and $K_{\eta+\gamma+\mu}F_{\overline{\gamma}}$ with $\mathcal{M}(E_{\overline{\mu}}K_{\eta+\gamma_{max}}+(rest)),\mathcal{M}(K_{\eta+\gamma_{max}+\mu}),
\mathcal{M}(K_{\eta+\gamma+\mu}F_{\overline{\gamma}})<\mathcal{M}(X)$. \\

Let $G$ be the finite set of all $\overline{\gamma}$ with $c_{\overline{\mu}_{max},\eta+\gamma,\overline{\gamma}}\neq 0$. 
Assume $\mathcal{M}(X-z)<\mathcal{M}(X)$ (in particular if $\overline{\mu}\notin \mathcal{M}(X-z)$): 
Then we have constructed the desired set in $C$ as follows:
\begin{align*}
\mathcal{Y}
=&\bigcup_{\overline{\gamma}\in G}\left\{\vphantom{x^{x^{x^x}}}
E_{\overline{\mu}}K_{\eta+\gamma_{max}}+(rest),\;
K_{\eta+\gamma_{max}+\mu},\;
K_{\eta+\gamma+\mu}F_{\overline{\gamma}}
\right\}\\
&\;\cup \;
\left\{
X-\underbrace{\sum_{\overline{\gamma}\in G}
\left(E_{\overline{\mu}}K_{\eta+\gamma_{max}}+(rest)\right)
\cdot K_{\eta+\gamma_{max}+\mu}^{-1}
\cdot K_{\eta+\gamma+\mu}F_{\overline{\gamma}}}_{=z}
\right\}
\end{align*}
such that $\langle\mathcal{Y}\rangle\ni X$ and $\mathcal{M}(\mathcal{Y})<\mathcal{M}(X)$. As stated above, we can thus inductively replace the generating system $Z$ by a generating system $Z'$ with $\mathcal{M}(Z')=\emptyset$, and thus, as claimed, construct a generating system consisting of elements with
$E$-leading terms in $U^{\geq 0}$, so the claim is proven.\\

Assume $\mathcal{M}(X-z)=\mathcal{M}(X)$, then in particular $\overline{\mu}\in \mathcal{M}(X-z)$ 
(maximal respecting the partial ordering $\prec$ and among all elements with the same $Q$-degree 
maximal respecting the lexicographical ordering). We construct a series of elements: $X^{(i)}$ with $X^{(1)}:=X-z$ and $X^{(i)}:=X^{(i-1)}-z^{(i-1)}$ for $i>1$, by considering for any $X^{(i)}$ with $\mathcal{M}(X-z)=\mathcal{M}(X)$ (i.e. that is
$\overline{\mu}\in \mathcal{M}(X^{(i)})$) the decomposition above. Thus there exists a $j$, such that $\mathcal{M}(X^{(j)}-z^{(j)})<\mathcal{M}(X)$ and $X$ can be generated by elements
 $X^{(j)}-z^{(j)}$ and $z^{(i)}$ for all $i\leq j$, where for all $i\leq j$ holds $\mathcal{M}(z^{(i)})<\mathcal{M}(X^{(i)})\leq \mathcal{M}(X)$.
Thus, as above, we can inductively construct a generating system of the required form.\\ 

\end{proof}

\begin{lemma}
Let $C$ be an RCS as above, $X\in C$ a weight vector of $T_L$, then $X$ has at most one leading term in $U^{\geq0}$.
\end{lemma}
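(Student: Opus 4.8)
The plan is to argue by contradiction: suppose $X$ has two distinct $E$-leading terms $L_{\mu_1}(X)$ and $L_{\mu_2}(X)$ lying in $U^{\geq0}$, with $\mu_1\neq\mu_2$ both maximal in $\mathcal{E}(X)$, and derive $\mu_1=\mu_2$. First I would invoke Lemma \ref{konstant} to reduce to the case where $\nu-\gamma=\eta$ is constant on $X$; the refinement $X=\sum_\eta X^{(\eta)}$ preserves the $T_L$-weight (the adjoint action only sees $\mu-\gamma$, so each subsum keeps the weight $\mu_0$), and this reduction is genuinely necessary, not merely convenient: without it the element $E_{\alpha_1}K_{\alpha_2}+E_{\alpha_2}K_{\alpha_1}$ in type $A_2$ is a weight vector for $L=\Z(\alpha_1+\alpha_2)$ with two $U^{\geq0}$-leading terms. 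Since the two leading terms lie in $U^{\geq0}$ they have $\gamma=0$, so with $\nu-\gamma=\eta$ each carries the single Cartan part $K_\eta$; I write $L_{\mu_i}(X)=P_iK_\eta$ with $0\neq P_i\in U^+_{\mu_i}$.

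Next I would exploit the weight condition. The operator $\mathrm{ad}(K_\lambda)$ is diagonal in the PBW basis, acting on $E_{\overline\mu}K_\nu F_{\overline\gamma}$ by $q^{(\lambda,\mu-\gamma)}$; since $q$ is not a root of unity, $X$ being a $T_L$-weight vector of weight $\mu_0$ forces $(\lambda,\mu-\gamma)=(\lambda,\mu_0)$ for every component and every $\lambda\in L$. Applied to the two leading terms (where $\gamma=0$) this gives $\mu_1\equiv\mu_0\equiv\mu_2$ modulo $L^\perp:=\{x\in Q\mid(x,L)=0\}$, i.e. $\mu_1-\mu_2\in L^\perp$.

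The crucial step is to extract the group-like elements $K_{\mu_i+\eta}$ inside $C$ from the coideal property. Here I would feed $X$ into (\ref{coprodukt}) and apply $\mathrm{id}\otimes\psi$, where $\psi$ is the coordinate functional dual to the PBW monomial $E_{\overline{\mu_i}}K_\eta$ for some $\overline{\mu_i}$ occurring in $P_i$; the result lies in $C$ because $\psi$ is applied to the second tensor factor. The term $X_{\mu,\nu,\gamma}\otimes K_{\nu-\gamma}$ dies as $\mu_i\neq0$, and the mixed terms of (\ref{coprodukt}) can contribute only if some component of $X$ has $U^+$-degree strictly above $\mu_i$ (excluded by maximality of $\mu_i$ in $\mathcal{E}(X)$) or has $U^+$-degree exactly $\mu_i$ with nonzero $F$-part (excluded because $L_{\mu_i}(X)\in U^{\geq0}$ forces $\gamma=0$ there); hence only $K_{\mu_i+\eta}\otimes X_{\mu_i,\eta,0}$ survives, yielding $K_{\mu_i+\eta}\in C$. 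Since $C^0=C\cap U^0$ is a Hopf subalgebra, $C^0=\k[L]$, so $K_{\mu_i+\eta}\in C\cap U^0=T_L=\k[L]$ gives $\mu_i+\eta\in L$, and subtracting the two relations gives $\mu_1-\mu_2\in L$.

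Finally I would combine $\mu_1-\mu_2\in L\cap L^\perp$ with the positive definiteness of the form $(,)$ (valid since $\g$ is semisimple): positive definiteness forces $L\cap L^\perp=0$, whence $\mu_1=\mu_2$, the desired contradiction. I expect the main obstacle to be the bookkeeping in the coproduct step, namely verifying that maximality of $\mu_i$ together with the $U^{\geq0}$-property of $L_{\mu_i}(X)$ kills all mixed and trivial contributions of (\ref{coprodukt}) so that exactly the group-like term remains; the reduction to constant $\nu-\gamma$ (without which the statement fails) and the appeal to positive definiteness are the two conceptual points that make the otherwise purely formal argument go through.
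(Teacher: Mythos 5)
Your proof is correct and takes essentially the same route as the paper's: the paper likewise invokes Lemma \ref{konstant} to put both leading terms over the same $K_\eta$, extracts $K_{\eta+\mu_1},K_{\eta+\mu_2}\in C$ from the coideal property as in Theorem \ref{Leitterme}, uses the Hopf-subalgebra hypothesis on $C^0$ to get $K_{\mu_1-\mu_2}\in T_L$, and then derives the contradiction from the $\mathrm{ad}(T_L)$-weight-vector property together with positive definiteness --- your ``$\mu_1-\mu_2\in L\cap L^{\perp}=0$'' is exactly the paper's ``$(\mu_1-\mu_2,\mu_1)\neq(\mu_1-\mu_2,\mu_2)$'' in different packaging. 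Your two refinements --- spelling out which terms of (\ref{coprodukt}) survive the coordinate functional, and the $A_2$ example $E_{\alpha_1}K_{\alpha_2}+E_{\alpha_2}K_{\alpha_1}$ showing the constant-$(\nu-\gamma)$ normalization is genuinely needed rather than cosmetic --- are points the paper leaves implicit, and they sharpen how the lemma's statement must be read.
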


\begin{proof}
Assume a weight vector $X$ of $T_L$ has two leading terms $L_{\mu}$ and $L_{\nu}$ for different elements
$\mu,\nu\in Q_+$, then from Lemma \ref{konstant} follows similar to the proof of Theorem \ref{Leitterme}, that due to 
$$L_{\mu}=\sum_{\overline{\mu}\text{ with }\pr(\overline{\mu})=\mu}
\limits c_{\overline{\mu},\eta,0}E_{\overline{\mu}}K_\eta$$ 
$$L_{\nu}=\sum_{\overline{\nu}\text{ with }\pr(\overline{\nu})=\nu}
\limits c_{\overline{\nu},\eta,0}E_{\overline{\nu}}K_\eta$$ 
also $K_{_\eta+\mu}\in C$ and $K_{_\eta+\nu}\in C$, 
so in particular as $C^0:=C\cap U^0$ is a sub Hopfalgebra, also $K_{\mu-\nu}$ lies in $C$. 
From the stability under $T_L$ it follows that $\mathrm{ad}(K_{\mu-\nu})X=q^{(\mu-\nu)}X$. 
This is a contradiction to $(\mu-\nu,\mu)\neq(\mu-\nu,\nu)$.
\end{proof}

With the same argument we can normalize the leading term, such that $L_{\mu}=X_{\mu}K_{\mu}^{-1}$ for a $X_{\mu}\in U_{\mu}^+$ and $K_{\mu}^{-1}\in U^0$.
The theorems above also hold for $F$-leading terms and we can choose a generating system of elements $X$ which have a maximum of one $E$-leading term $L_\mu\in U^{\geq0}$ and one F-leading term $L^-_{\nu} \in U^{\leq0}$,
because the two construction steps can not (or only conditionally) reverse each other since they both reduce the overall degree of the mixed terms.

\begin{corollary}
Let $C$ be an RCS with the property $C^0:=C\cap U^0$ is a sub Hopfalgebra. 
We can choose a generating system of $C$ consisting of elements with each at most one $E$- and one $F$-leading term $L_\mu,L_{-\nu}$, which are moreover of the form:
$$L_\mu=X_\mu K_\mu^{-1},\; X_\mu\in U^+_\mu \qquad L_{-\nu}=Y_{-\nu}K_{\mu-\nu}^{-1},\;Y_{-\nu}\in U^-_{-\nu}$$
\end{corollary}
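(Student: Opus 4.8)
The plan is to assemble the statements proved above and then add one normalization step. First I would fix a generating system of $\mathrm{ad}(T_L)$-weight vectors, which exists because $C^0=C\cap U^0$ is a sub-Hopfalgebra and $\mathrm{ad}\,U^0$ acts diagonalizably on $U$. Refining it by Lemma \ref{konstant} so that $\eta:=\nu-\gamma$ is constant on each generator, and then applying Theorem \ref{Leitterme} together with its $F$-analogue, produces generators whose $E$-leading term lies in $U^{\geq0}$ and whose $F$-leading term lies in $U^{\leq0}$; by the preceding weight-vector lemma each of these is unique. As noted above, the two reduction algorithms do not reverse one another, since both strictly lower the degree of the mixed part, so a single generating system realizes both normal forms simultaneously.

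It then remains to normalize the $K$-exponents of the two leading terms. Let $X$ be such a generator with constant $\eta$ and $E$-leading term $L_\mu=X_\mu K_\eta$, $X_\mu\in U^+_\mu$. Running the coproduct computation from the proof of Theorem \ref{Leitterme} (now with $\gamma_{max}=0$, since $L_\mu\in U^{\geq0}$) extracts the group-like factor $K_{\eta+\mu}\otimes L_\mu$ as the top $U^+$-degree contribution of $\Delta(X)$, so the right coideal property forces $K_{\eta+\mu}\in C$; because $C^0$ is a sub-Hopfalgebra, also $K_{\eta+\mu}^{-1}\in C$.

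Next I would replace $X$ by $X\,K_{\eta+\mu}^{-1}\in C$. Since $K_{\eta+\mu}\in C$ as well, this leaves the generated subalgebra unchanged, and multiplying by a group-like element only shifts every $K$-exponent by $-\eta-\mu$ while preserving all $U^+$- and $U^-$-degrees; hence the leading-term structure, and in particular the sets $\mathcal{M}$, are untouched. The $E$-leading term becomes $X_\mu K_{\eta-\eta-\mu}=X_\mu K_\mu^{-1}$, so the normalized generator has constant $-\mu$. Its $F$-leading term has $U^-$-degree $-\nu$, i.e. $\gamma=\nu$, so by constancy its $K$-exponent is $\eta+\nu$, which after the shift becomes $K_{\eta+\nu-\eta-\mu}=K_{\mu-\nu}^{-1}$; this yields $L_{-\nu}=Y_{-\nu}K_{\mu-\nu}^{-1}$ with $Y_{-\nu}\in U^-_{-\nu}$, exactly the asserted form.

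The hard part is the membership $K_{\eta+\mu}\in C$, which is the only step using both the right coideal property and the hypothesis on $C^0$; once it is in hand, the constancy of $\eta$ from Lemma \ref{konstant} rigidly couples the $E$- and $F$-side $K$-exponents, so the single factor $K_{\eta+\mu}^{-1}$ normalizes both leading terms at once. The degenerate cases need no extra work: a generator in $U^{\geq0}$ has no $F$-leading term, and one in $U^{\leq0}$ has no $E$-leading term, in which case one sets $\mu=0$ and argues symmetrically from the $F$-side.
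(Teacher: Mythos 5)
Your proposal is correct and follows essentially the same route as the paper: apply Theorem \ref{Leitterme} and its $F$-analogue to a system of $\mathrm{ad}(T_L)$-weight vectors with constant $\eta=\nu-\gamma$, use the weight-vector lemma for uniqueness of the leading terms, extract $K_{\eta+\mu}\in C$ (hence $K_{\eta+\mu}^{-1}\in C$ by the sub-Hopfalgebra hypothesis) from the coproduct, and normalize by multiplying with $K_{\eta+\mu}^{-1}$, the constancy of $\eta$ coupling the $E$- and $F$-side $K$-exponents exactly as claimed. Your write-up in fact makes explicit the details the paper only sketches in the prose between the lemma and the corollary.
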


For leading terms $L_{\mu}$ in $U^{\geq0}$ holds, that the product of two such leading terms is again the leading term of an element in $ C $.
Moreover, for any Linear form $\varphi\in U^*$ also $(id\otimes \varphi)\Delta(L_{\mu})$
is a leadingterm of an element in $C$. The set of all leading terms forms again an RCS $\mathfrak{L}$ in $U^{\geq0}$. 
From Theorem \ref{rcsinu+} there exists a $w\in W$ such that $\mathfrak{L}=U^+[w]T_{L'}$. 
Thus we can choose exactly those elements in the generating system whose leading terms are root vectos and so in particular generate all elements in $C$ except $C^{\leq0}$. 
 From these considerations, the main result of this paper follows:



\begin{theorem}
\label{pbwleitterm}\label{hauptsatzgenerator}
Let $C$ be an RCS with the property $C^0:=C\cap U^0$ is a sub Hopf--algebra. 
Then we can construct a generating system of $C$, such that any generator has at most one $E$- and one $F$-leading term, 
which is moreover a root vector of $\psi(U^+[w^+]),U^-[w^-]$ with a suitable $U^0$-part for suitably chosen $w^+,w^-\in W$.
\end{theorem}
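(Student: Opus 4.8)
The plan is to combine the reductions already obtained with the Heckenberger--Kolb classification of right coideal subalgebras of the Borel part. I would begin from the generating system supplied by the preceding Corollary, in which every generator carries at most one $E$-leading term $L_\mu = X_\mu K_\mu^{-1}$ with $X_\mu\in U^+_\mu$ and at most one $F$-leading term $L_{-\nu} = Y_{-\nu}K_{\mu-\nu}^{-1}$ with $Y_{-\nu}\in U^-_{-\nu}$. The first step is to verify that the set $\mathfrak{L}$ of all occurring $E$-leading terms is a right coideal subalgebra of $U^{\geq 0}$: closure under multiplication holds because the product of two leading terms is the leading term of the product of the corresponding elements of $C$, and the right coideal property follows by applying $(\id\otimes\varphi)\Delta$ to a leading term and extracting the top $Q$-degree component through the coproduct formula (\ref{coprodukt}), which again lands in $\mathfrak{L}$.

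Having identified $\mathfrak{L}\subset U^{\geq 0}$ as an RCS, I would invoke Theorem \ref{rcsinu+}. That theorem is stated for $U^{\leq 0}$, so for $\mathfrak{L}$ I use its mirror version transported along $\psi$, concluding that up to a character shift $\mathfrak{L} = \psi(U^+[w^+])\,T_{L'}$ for a suitable $w^+\in W$ and a subgroup $L'\subset Q$. By Theorem \ref{HS09} the algebra $U^+[w^+]$ is generated by the root vectors $E_{\beta_i}$, $\beta_i\in\Phi^+(w^+)$, relative to a chosen reduced expression of $w^+$; hence $\mathfrak{L}$ is generated by the corresponding $\psi$-images together with $T_{L'}$. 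I would then select, for each such generating root vector, one generator of $C$ whose $E$-leading term is precisely that root vector with its $K_\mu^{-1}$-shift, while any remaining generator whose $E$-leading term is a product of the chosen ones can be corrected by subtracting the matching product of generators, strictly lowering its leading term; this yields generators that produce everything in $C$ except the purely negative part $C^{\leq 0}$.

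Running the symmetric argument for the $F$-leading terms, the set of $F$-leading terms forms an RCS in $U^{\leq 0}$, to which Theorem \ref{rcsinu+} applies directly and gives $U^-[w^-]\,T_{L''}$, so the $F$-leading terms may likewise be chosen to be root vectors of $U^-[w^-]$. The hard part, and the step I would argue most carefully, is imposing both normalizations \emph{simultaneously} on one generating system without the $E$-adjustment destroying the $F$-normalization or conversely. Here I would lean on the observation recorded just before the statement, namely that each reduction strictly decreases the total degree of the mixed part; since both procedures are monotone for this measure, they cannot loop against one another and can be interleaved until both terminate, exactly as in the proofs of Theorem \ref{Leitterme} and Lemma \ref{konstant}. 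Finally I would track the surviving factors $K_\mu^{-1}$ and $K_{\mu-\nu}^{-1}$ together with the character data from Theorem \ref{rcsinu+} through both reductions, so that each generator is assigned a well-defined \emph{suitable $U^0$-part} and the resulting generators fit the conjectured normal form (\ref{ziel}).
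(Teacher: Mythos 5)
Your proposal follows essentially the same route as the paper's own argument: the paper likewise forms the right coideal subalgebra $\mathfrak{L}$ of all $E$-leading terms (closure under products and under $(\id\otimes\varphi)\Delta$), invokes Theorem \ref{rcsinu+} to identify it as $U^+[w]T_{L'}$, selects those generators whose leading terms are root vectors so that they generate everything outside $C^{\leq 0}$, and settles the simultaneous $E$/$F$ normalization by the same observation that both reductions strictly decrease the degree of the mixed part. Your extra care with the $\psi$-transport of Theorem \ref{rcsinu+}, the subtraction-correction of generators whose leading terms are products, and the bookkeeping of the $U^0$-part only makes explicit what the paper leaves implicit.
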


\begin{corollary}\label{leittermwahl}
$C^{\geq0}:=U^{\geq0}\cap C$ is an RCS of $C$ so by Theorem \ref{rcsinu+} of the form 
$\psi(U^+[w'])_{\phi}T_L$ for $w'\in W$ a Weylgroup element and a character $\phi$ on $\psi(U^+[w'])$. 
Then $w'\leq w$ for $w$ chosen above.
\end{corollary}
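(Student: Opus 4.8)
The plan is to proceed in three stages: first verify that $C^{\geq0}$ really is a right coideal subalgebra sitting inside both $C$ and $U^{\geq0}$, then apply Theorem \ref{rcsinu+} to obtain the Weyl group element $w'$, and finally compare $w'$ with $w$ by a leading-term argument combined with the Bruhat-order characterization \eqref{redexpvonteilmenge}.

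For the first stage I would use that $U^{\geq0}=U^+U^0$ is a Hopf subalgebra, so that for any $x\in C^{\geq0}=C\cap U^{\geq0}$ one has simultaneously $\Delta(x)\in C\otimes U$ and $\Delta(x)\in U^{\geq0}\otimes U^{\geq0}$. Since both $C$ and $U^{\geq0}$ are homogeneous for the triangular ($Q^3$-)grading, intersecting these two conditions gives $\Delta(x)\in (C\cap U^{\geq0})\otimes U^{\geq0}\subset C^{\geq0}\otimes U$, so $C^{\geq0}$ is an RCS in $U^{\geq0}$. Transporting the statement of Theorem \ref{rcsinu+} from $U^{\leq0}$ to $U^{\geq0}$ via the isomorphism $\psi$ then yields $C^{\geq0}=\psi(U^+[w'])_\phi T_L$ for some $w'\in W$ and a character $\phi$.

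The heart of the matter is the comparison $w'\le w$. The key observation is that every element of $C^{\geq0}$ lies in $U^{\geq0}$ and hence has trivial $F$-part, so that its $E$-leading term in the sense of Definition \ref{def_Leitterm} is just its top $U^+$-degree component and again lies in $U^{\geq0}$. Because $C^{\geq0}\subset C$, all of these leading terms are in particular $E$-leading terms of elements of $C$, hence belong to the leading-term RCS $\mathfrak{L}=U^+[w]T_{L'}$. Concretely, for each $\beta\in\Phi^+(w')$ I would take the corresponding character-shifted generator $(\phi\otimes\id)\Delta(\psi(E_\beta))\in C^{\geq0}$ and argue that its $E$-leading term is a nonzero multiple of the root vector $E_\beta$ together with a suitable $U^0$-factor: by the coproduct formula \eqref{coprodukt} restricted to $U^{\geq0}$, the character shift and the torus factor $T_L$ contribute only terms of strictly smaller $U^+$-degree, so the top degree is unchanged. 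Consequently $E_\beta\in\mathfrak{L}=U^+[w]T_{L'}$, which as a subalgebra is generated by the root vectors $E_\gamma$ with $\gamma\in\Phi^+(w)$; comparing $Q$-degrees forces $\beta\in\Phi^+(w)$. This gives $\Phi^+(w')\subseteq\Phi^+(w)$, and \eqref{redexpvonteilmenge} then yields $w'\le w$.

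I expect the main obstacle to be exactly the last point of the previous paragraph: showing that the character shift $\phi$ and the group-ring factor $T_L$ do not alter the top $U^+$-degree, so that the leading term of each generator of $C^{\geq0}$ is genuinely a root vector recorded in $\mathfrak{L}$. This is where one must read off the maximal $U^+$-degree contribution of $(\phi\otimes\id)\Delta(\psi(E_\beta))$ from \eqref{coprodukt} and check that $\psi(E_\beta)$ itself survives as the unique maximal-degree term. A secondary, more bookkeeping-type subtlety is to ensure that the isomorphism $\psi$ used to apply Theorem \ref{rcsinu+} identifies the root support of $C^{\geq0}$ with $\Phi^+(w')$ compatibly with the convention under which $\mathfrak{L}$ was written as $U^+[w]T_{L'}$; since $\psi$ respects the combinatorics of $W$ and both algebras live in $U^{\geq0}$, this should reduce to matching conventions rather than new mathematics.
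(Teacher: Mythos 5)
Your overall strategy is exactly the one the paper intends: the corollary carries no separate proof and is meant to follow from the discussion preceding Theorem \ref{hauptsatzgenerator}, where the set $\mathfrak{L}$ of all $E$-leading terms of elements of $C$ is shown to be an RCS of $U^{\geq0}$ of the form $U^+[w]T_{L'}$; one then feeds the leading terms of the generators of $C^{\geq0}$ into $\mathfrak{L}$, obtains $\Phi^+(w')\subseteq\Phi^+(w)$, and concludes $w'\leq w$ by (\ref{redexpvonteilmenge}). Your handling of the character shift (the shift and the torus factor contribute only terms of strictly smaller $U^+$-degree, so the top term of $(\phi\otimes\id)\Delta(\psi(E_\beta))$ is still the root vector) is correct and is the substantive point of the comparison.

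Two of your justifications need repair, though neither affects the architecture. First, you assert that $C$ is homogeneous for the triangular $Q^3$-grading; this is false in general --- the whole point of the paper is that RCS contain inhomogeneous elements such as $EK^{-1}+\lambda K^{-1}+c_FF$ --- and it is also unnecessary: the inclusion $\Delta(C^{\geq0})\subset C^{\geq0}\otimes U$ follows from the purely linear-algebraic identity $(C\otimes U)\cap(U^{\geq0}\otimes U^{\geq0})=(C\cap U^{\geq0})\otimes U^{\geq0}$, valid for arbitrary subspaces, with no homogeneity assumption on $C$. Second, ``comparing $Q$-degrees forces $\beta\in\Phi^+(w)$'' is too quick: the weight-$\beta$ component of $U^+[w]$ is nonzero precisely when $\beta\in\mathbb{N}_0\Phi^+(w)$, and a priori a positive root could be a nonnegative integer combination of roots of $\Phi^+(w)$ without itself belonging to $\Phi^+(w)$. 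You need the standard sign argument: if $\beta=\sum_i c_i\gamma_i$ with $\gamma_i\in\Phi^+(w)$ and $c_i\in\mathbb{N}_0$, then $w^{-1}\beta=\sum_i c_iw^{-1}\gamma_i$ is a nonpositive combination of simple roots, and since $\beta$ is a root, $w^{-1}\beta$ is a root, hence negative, so $\beta\in\Phi^+(w)$. (Alternatively you can bypass root-vector membership entirely: the leading terms of $C^{\geq0}=\psi(U^+[w'])_{\phi}T_L$ form the homogeneous RCS $\psi(U^+[w'])T_L\subset\mathfrak{L}$, and the order correspondence of Theorem \ref{HS09} then yields $w'\leq w$ directly.) With these two repairs your proof is complete and coincides with the paper's implicit argument.
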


\subsection{Further conjecture of a generating system in $U^{\geq0}+U^{\leq0}$}

We conjecture that we can substantially intensify the previous theorem, which we neither prove here nor use in the following:
\begin{conjecture}\label{conchierweiterung}
Let $w\in W$ be a Weyl group element with reduced expression $w=s_{\alpha_1}\ldots s_{\alpha_{\ell(w)}}$ and $\Phi^+(w)=\{\beta_1,\ldots \beta_{\ell(w)}\}$. 
 For the corresponding root vectors we know by \ref{conchi} that the following commutator rule holds: Let $\beta_i,\beta_j\in \Phi^+(w)$, and $i<j$, then:
 $$[E_{\beta_i},E_{\beta_j}]=\sum_{(a_{i+1},\ldots a_{j-1})\in \mathbb{N}_0^{j-i-1}}m_{(a_{i+1},\ldots a_{j-1})}E_{\beta_{i+1}}^{a_{i+1}}\ldots E_{\beta_{j-1}}^{a_{j-1}} $$
 by \cite{LS91} Prop. 5.5.2.\\
 
 We conjecture additionally, that for $\beta_i,\beta_j\in\Phi^+$ with $\beta_i+\beta_j=\beta_k\in \Phi^+$ for the vector $(a_{i+1},\ldots a_{j-1})$ with $a_i=\delta_{ik}$ 
 the coefficient $m_{(a_{i+1},\ldots a_{j-1})}$ is not zero.
\end{conjecture}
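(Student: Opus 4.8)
The plan is to reduce the nonvanishing of the distinguished coefficient to the classical Chevalley structure constants of $\g$ via the specialization $q\to 1$, where the statement is elementary.

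First I would work over the generic parameter inside Lusztig's integral $\Z[q,q^{-1}]$-form $U_{\mathcal{A}}$ of $U_q(\g)$, in which all root vectors $E_{\beta_i}=T_{s_{\alpha_1}}\cdots T_{s_{\alpha_{i-1}}}(E_{\alpha_i})$ lie and in which the coefficients $m_{(a_{i+1},\ldots,a_{j-1})}$ of the Levendorskii--Soibelman expansion in Lemma \ref{conchi} are Laurent polynomials in $q$ (see \cite{Jan96}, Chapter 8). I would first record that convexity of the ordering on $\Phi^+(w)$ forces $\beta_i<\beta_k<\beta_j$ whenever $\beta_i+\beta_j=\beta_k$, so that $k\in\{i+1,\ldots,j-1\}$ and the monomial $E_{\beta_k}$ genuinely occurs among the right-hand terms; hence the distinguished index vector $a_\ell=\delta_{\ell k}$ is admissible.

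Next I would apply the specialization homomorphism $U_{\mathcal{A}}\otimes_{\Z[q,q^{-1}]}\C\to U(\g)$ at $q=1$, under which each $E_{\beta_i}\mapsto c_{\beta_i}e_{\beta_i}$ with $c_{\beta_i}\neq 0$ and $e_{\beta_i}$ the corresponding Chevalley root vector. The left-hand side of the identity of Lemma \ref{conchi}, read as the appropriate (possibly $q$-deformed) commutator, specializes to the ordinary Lie bracket $c_{\beta_i}c_{\beta_j}[e_{\beta_i},e_{\beta_j}]$, because any factor $q^{(\beta_i,\beta_j)}\to 1$. By the classical commutation relations $[e_{\beta_i},e_{\beta_j}]=N_{\beta_i,\beta_j}\,e_{\beta_k}$ with $N_{\beta_i,\beta_j}=\pm(p+1)\neq 0$, where $p$ is the largest integer with $\beta_j-p\beta_i\in\Phi$. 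On the right-hand side each PBW monomial specializes to a classical PBW monomial, and these are linearly independent in $U(\g)$; comparing the coefficient of $e_{\beta_k}$ on both sides therefore forces $m_{(\delta_{\ell k})}|_{q=1}=c_{\beta_i}c_{\beta_j}N_{\beta_i,\beta_j}/c_{\beta_k}\neq 0$. In particular $m_{(\delta_{\ell k})}$ is a nonzero Laurent polynomial, which already proves the conjecture for generic $q$.

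The main obstacle is upgrading ``generic $q$'' to ``every $q$ that is not a root of unity'', since a nonzero Laurent polynomial may still vanish at isolated algebraic values. To close this gap I would show that $m_{(\delta_{\ell k})}$ has the explicit shape of a $q$-power times a quantum integer, so that all of its zeros are roots of unity. I expect to obtain this by applying a suitable braid automorphism $T_v$ to reduce the triple $\beta_i,\beta_k,\beta_j$ to the rank-two subsystem $\Phi\cap(\R\beta_i+\R\beta_j)$, which is of type $A_2$, $B_2$ or $G_2$; in each of these finitely many cases the relevant coefficient is computed explicitly in \cite{Jan96} and is manifestly a $q$-power times a quantum integer. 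Checking that the braid reduction preserves this distinguished coefficient, and that it sends the three root vectors to the expected root vectors of the rank-two quantum subgroup, is the technical heart of the argument.
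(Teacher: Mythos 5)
First, a point of order: the paper does not prove this statement at all --- it is stated as Conjecture \ref{conchierweiterung}, and the author explicitly writes that it is ``neither prove[d] here nor use[d] in the following.'' So there is no proof of record to compare yours against; your attempt has to stand on its own, and as it stands it is incomplete.

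Your first step is essentially sound. Convexity of the ordering does force $i<k<j$, so the distinguished index vector is admissible; the Levendorskii--Soibelman coefficients lie in a ring in which evaluation at $q=1$ makes sense (this integrality is due to De Concini--Kac rather than to \cite{Jan96}, which should be cited accordingly); and comparing with the classical structure constants $N_{\beta_i,\beta_j}=\pm(p+1)\neq 0$ via linear independence of classical PBW monomials legitimately shows that $m_{(\delta_{\ell k})}$ is a Laurent polynomial not vanishing at $q=1$. This proves the conjecture for transcendental $q$, and for all but finitely many algebraic values --- but the paper's standing hypothesis is only that $q$ is not a root of unity, so algebraic values such as $q=2$ must be covered, and that is exactly where your second step, which you yourself call the technical heart, fails to materialize.

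Concretely, three claims in that step are unjustified. (a) The LS coefficients depend on the chosen reduced expression of $w$, and a braid automorphism $T_v$ does not in general carry the PBW root vectors of one convex order to those of another in a way that isolates the distinguished coefficient; you would need a precise statement of how $m_{(\delta_{\ell k})}$ transforms under braid moves, and none is supplied --- this is precisely the kind of control the author evidently could not establish, which is why the statement remains a conjecture. (b) The relation does not localize to the rank-two subsystem $\Phi\cap(\R\beta_i+\R\beta_j)$: only the \emph{total} weight $\beta_i+\beta_j$ of each monomial $E_{\beta_{i+1}}^{a_{i+1}}\cdots E_{\beta_{j-1}}^{a_{j-1}}$ lies in that plane, while the individual roots $\beta_\ell$ occurring need not, so the computation is not a computation inside a rank-two quantum subgroup; moreover such a plane subsystem need not be $W$-conjugate to a standard parabolic one (the long roots of $G_2$ form an $A_2$ that is not parabolic), so there may be no $T_v$ doing what you want. (c) The assertion that the relevant rank-two coefficients are ``computed explicitly in \cite{Jan96}'' and are manifestly a $q$-power times a quantum integer is not in that reference: the complete convex-order commutator tables for $B_2$ and $G_2$, for \emph{every} reduced expression, are not there, and verifying the cyclotomic shape of every such coefficient is a finite but unperformed computation. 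In sum, your proposal proves a genuinely weaker statement (generic $q$) and leaves the conjecture in the paper's regime open --- matching, in effect, the status it already has in the paper.
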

\begin{conjecture}\label{basistheorem}
We can choose a generating system of $C$ consisting of elements of the form 
$$\lambda_EE^{\phi_E}_{\mu}+\lambda_F K^{-1}_{\mu-\nu}F_{\nu}^{\phi_F}+\lambda_KK^{-1}_{\mu}$$
Where $E_{\mu}$ resp. $F_{\nu}$ are root vectors respecting $\psi(U^+[w])$, resp. $U^-[w']$ for certain Weyl group elements $w,w'\in W$
and characters $\phi_E:\psi(U^+[w])\rightarrow \mathbb{C}$, $\phi_F:U^-[w']\rightarrow \mathbb{C}$.
\end{conjecture}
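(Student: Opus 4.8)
The plan is to assemble the Theorem from the preparatory results by passing from the RCS $C$ itself to the RCS formed by its leading terms, and then to invoke the Heckenberger--Kolb classification in the form of Theorem \ref{rcsinu+}. First I would fix a generating system consisting of $\mathrm{ad}(T_L)$-weight vectors; this is possible because $C^0 = C\cap U^0$ is a sub Hopf algebra, so $C$ is $\mathrm{ad}(T_L)$-stable and $\mathrm{ad}(U^0)$ acts diagonalizably on $U$. Refining this system by Lemma \ref{konstant}, I may assume each generator $X$ has constant $\nu-\gamma=\eta$. Applying Theorem \ref{Leitterme} together with its mirror version for $F$-leading terms, and using that a weight vector has at most one leading term on each side, I obtain a generating system in which every generator carries at most one $E$-leading term $L_\mu = X_\mu K_\mu^{-1}\in U^{\geq0}$ and at most one $F$-leading term $L_{-\nu}=Y_{-\nu}K_{\mu-\nu}^{-1}\in U^{\leq0}$. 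The two reduction algorithms may be run consecutively without interfering, since each strictly lowers the degree of the mixed part while leaving the opposite leading term untouched.

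The heart of the argument is to show that the set $\mathfrak{L}$ of all $E$-leading terms of elements of $C$ is itself a right coideal subalgebra of $U^{\geq0}=U^+U^0$. For the subalgebra property I would take $X,X'\in C$ with leading terms $L_\mu,L_{\mu'}$ and observe that the product $XX'\in C$ has leading term $L_\mu L_{\mu'}$: every PBW-contribution of $X$ has $U^+$-degree $\preceq\mu$ and every one of $X'$ has $U^+$-degree $\preceq\mu'$, so the only contribution reaching degree $\mu+\mu'$ is the product of the two leading parts, and this is nonzero because $U^+$ has no zero divisors for $q$ not a root of unity. For the coideal property I would apply the coproduct formula (\ref{coprodukt}): for a generator $X$ with $E$-leading term $L_\mu\in U^{\geq0}$, the part of $\Delta(X)$ whose left tensor factor has maximal $U^+$-degree $\mu$ agrees, modulo terms of strictly smaller left-degree, with $\Delta(L_\mu)$. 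Consequently, for any $\varphi\in U^*$ the element $(\id\otimes\varphi)\Delta(X)\in C$ has $E$-leading term $(\id\otimes\varphi)\Delta(L_\mu)$, which shows that $\mathfrak{L}$ is stable under the left coaction and hence an RCS of $U^{\geq0}$.

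With $\mathfrak{L}\subset U^{\geq0}$ an RCS, Theorem \ref{rcsinu+} (transported by $\psi$ to the positive side) supplies a Weyl group element $w^+$, a character, and a subgroup so that $\mathfrak{L}=\psi(U^+[w^+])_\phi T_{L'}$; in particular $\mathfrak{L}$ is generated by the character-shifted root vectors $E_{\beta_i}$, $\beta_i\in\Phi^+(w^+)$, carrying their $U^0$-parts. Running the symmetric analysis on the $F$-leading terms yields a second element $w^-\in W$ whose root vectors of $U^-[w^-]$ generate the corresponding RCS of $U^{\leq0}$. Finally I would select, among the generators produced in the first paragraph, those whose $E$- (resp. $F$-) leading terms are exactly these root vectors; since the leading-term assignment is multiplicative on top-degree parts, such a choice suffices to generate everything in $C$ outside $C^{\leq0}$, with the remainder handled by the $F$-side and by $C^0$.

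I expect the main obstacle to be the second paragraph: verifying cleanly that $\mathfrak{L}$ is closed under \emph{both} multiplication and the left coaction requires the convexity of the chosen ordering on $\Phi^+(w_0)$ (to control which PBW monomials survive in top degree) and a careful bookkeeping of which summands of (\ref{coprodukt}) contribute at maximal left-degree. It is precisely here that the hypothesis ``$C^0$ is a sub Hopf algebra'' is genuinely used, since it guarantees that the normalizing factors $K_\mu^{-1}$ lie in $C$ and hence that the normalized leading terms $L_\mu=X_\mu K_\mu^{-1}$ are legitimate members of $\mathfrak{L}$ rather than mere leading \emph{coefficients}.
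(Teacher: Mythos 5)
You are proving Conjecture \ref{basistheorem}, which the paper explicitly leaves open (``we neither prove here nor use in the following''); your argument, however, is essentially the paper's own derivation of the \emph{weaker} Main Theorem \ref{hauptsatzgenerator}, and it does not close the gap between the two. Your three steps --- generators that are $\mathrm{ad}(T_L)$-weight vectors with at most one $E$- and one $F$-leading term, the observation that the set $\mathfrak{L}$ of $E$-leading terms is an RCS of $U^{\geq0}$, and the appeal to Theorem \ref{rcsinu+} to identify $\mathfrak{L}$ with $\psi(U^+[w^+])_\phi T_{L'}$ --- control only the \emph{leading} terms of the generators. The conjecture demands control of the \emph{entire} generator: it must equal $\lambda_E E^{\phi_E}_\mu + \lambda_F K^{-1}_{\mu-\nu}F^{\phi_F}_\nu + \lambda_K K^{-1}_\mu$, i.e.\ lie in $U^{\geq0}+U^{\leq0}$ with its lower-order tail prescribed exactly by the characters $\phi_E,\phi_F$. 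A generator whose $E$-leading term is the root vector $E_\mu K_\mu^{-1}$ may still carry mixed summands $E_{\mu'}K_{\nu'}F_{-\gamma'}$ with $0\prec\mu'\prec\mu$ and $\gamma'\neq0$; nothing in your final selection step removes them or shows that they organize themselves into the character-shift form $E^{\phi_E}_\mu$. That such tails genuinely occur is visible in the paper's own $U_q(\sl_3)$ list: items 3c) and 3d) contain the mixed monomials $F_\alpha E_\beta K_\beta^{-1}$ and $E_\alpha F_\alpha K_\beta^{-1}$, so elements of an RCS whose leading terms are root vectors need not lie in $U^{\geq0}+U^{\leq0}$, and ``leading terms are root vectors'' does not imply the conjectured form.

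The missing idea --- as the paper's remark following the conjecture explains --- is a reduction of the \emph{maximal mixed summand}: assuming a generator $X$ is not of the desired form, one must replace it by elements of $C$ with strictly smaller leading terms, and the step that makes this possible requires Conjecture \ref{conchierweiterung}, namely the nonvanishing of the coefficient $m_{(a_{i+1},\ldots,a_{j-1})}$ of $E_{\beta_k}$ in the Levendorskii--Soibelman commutator $[E_{\beta_i},E_{\beta_j}]$ whenever $\beta_i+\beta_j=\beta_k\in\Phi^+$. Without that nonvanishing (itself unproven in the paper), the induction that eliminates the mixed part can stall: one cannot guarantee that the offending summand is expressible through other generators of lower degree. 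So your proposal, if completed, establishes Theorem \ref{hauptsatzgenerator} along the paper's own lines, but it leaves Conjecture \ref{basistheorem} exactly as open as the paper does.
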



\begin{remark}
A proof would essentially work as the proof of Theorem \ref{Leitterme}. 
But at a crucial point the special property of the ordering $\Phi^+(w)$ from conjecture \ref{conchierweiterung} is used:
Assume $X$ has not the desired form. We consider first a maximal mixed summand resp. $\prec$ and argue as in the proof above,
however instead of the global maximality of the $E-Terms$ we use conjecture
\ref{conchierweiterung} and argue, that this element can be replaced by other generating elements with a smaller leading term. 
\end{remark}
For all right coideal subalgebras with a generating system of the form above, we can prove some further restrictions of the $U^0$-part and the support $supp(\phi_E)$ resp. $supp(\phi_F)$.

\begin{lemma}\label{muundnu}
For generating elements of an RCS $C$ of the form 
$$\lambda_EE^{\phi_E}_{\mu}+\lambda_F K^{-1}_{\mu-\nu}F_{\nu}^{\phi_F}+\lambda_KK^{-1}_{\mu}$$ 
with $\mu\neq \nu$, the following restriction holds:
\begin{itemize}
\item $\lambda_K=0$ or $\lambda_F=0$
\item $\mu+\nu\bot \mu-\nu$
\end{itemize} 
\end{lemma}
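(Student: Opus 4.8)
The plan is to extract a suitable grouplike element into $C$ from the $F$-leading term and then to confront it with the fact that $X$ is a $T_L$-weight vector, so that the adjoint action of that grouplike must act by one and the same scalar on all three summands of $X$.

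First I would produce a grouplike in $C$ out of the $F$-leading term. By the normalization of Theorem \ref{hauptsatzgenerator} the leading $F$-part of the summand $\lambda_F K^{-1}_{\mu-\nu}F^{\phi_F}_\nu$ is, up to a nonzero scalar, the PBW monomial $F_\nu K_{\mu-\nu}^{-1}$, and since $\Delta(F_\nu)=F_\nu\otimes K_\nu^{-1}+1\otimes F_\nu+(\text{lower }F\text{-degree})$ one gets
$$\Delta\!\left(F_\nu K_{\mu-\nu}^{-1}\right)=F_\nu K_{\mu-\nu}^{-1}\otimes K_\nu^{-1}K_{\mu-\nu}^{-1}+K_{\mu-\nu}^{-1}\otimes F_\nu K_{\mu-\nu}^{-1}+\cdots,$$
so the left tensor factor $K_{\mu-\nu}^{-1}$ is grouplike. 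As the $E$-part lies in $U^{\geq0}$ and the remaining summand is $K_\mu^{-1}$, no other piece of $X$ contributes a right tensor factor of full $F$-degree $-\nu$ equal to the monomial $F_\nu K_{\mu-\nu}^{-1}$. Applying $\id\otimes\varphi$ with $\varphi$ dual to this monomial, exactly as in Lemma \ref{konstant}, and invoking the right coideal property $\Delta(C)\subset C\otimes U$, I obtain $K_{\mu-\nu}^{-1}\in C$. Because $C^0=C\cap U^0$ is a sub Hopf algebra, $T_L=\k[L]$ and hence $\mu-\nu\in L$.

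Next I would use the weight-vector property. Since $\mu-\nu\in L\subset\supp(\phi_E)^\perp\cap\supp(\phi_F)^\perp$, the character shifts do not affect the $\mathrm{ad}(K_{\mu-\nu})$-eigenvalue, and each summand of $X$ is a genuine $\mathrm{ad}(K_{\mu-\nu})$-eigenvector carrying the eigenvalue of its leading $Q$-degree: $q^{(\mu-\nu,\mu)}$ on $\lambda_E E^{\phi_E}_\mu$, $q^{-(\mu-\nu,\nu)}$ on $\lambda_F K^{-1}_{\mu-\nu}F^{\phi_F}_\nu$, and $1$ on $\lambda_K K^{-1}_\mu$. As $X$ is itself an $\mathrm{ad}(T_L)$-weight vector, all eigenvalues of the nonzero summands must coincide. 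Comparing the $E$- and $F$-summands (both present) gives $q^{(\mu-\nu,\mu)}=q^{-(\mu-\nu,\nu)}$, and since $q$ is not a root of unity this forces $(\mu-\nu,\mu+\nu)=0$, i.e. $\mu+\nu\perp\mu-\nu$, which is the second assertion. For the first, assume in addition $\lambda_K\neq0$; comparing with the $K$-summand yields $q^{(\mu-\nu,\mu)}=1$, so $(\mu-\nu,\mu)=0$, and together with $(\mu-\nu,\mu+\nu)=0$ also $(\mu-\nu,\nu)=0$, whence $(\mu-\nu,\mu-\nu)=0$. As the form is positive definite on $\R^\Pi$ this gives $\mu=\nu$, contradicting $\mu\neq\nu$; therefore $\lambda_K=0$ or $\lambda_F=0$.

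The main obstacle is the clean extraction of $K_{\mu-\nu}^{-1}\in C$ in the first step: one must ensure that the lower-order corrections coming from the character shifts $\phi_E,\phi_F$ (which contribute additional $K$-terms and $F$-terms of strictly smaller $F$-degree) do not also feed into the PBW component $F_\nu K_{\mu-\nu}^{-1}$ of the right tensor factor. This is exactly the top-degree isolation used in the proof of Theorem \ref{Leitterme} and in the subsequent lemma on distinct leading terms, where selecting the maximal $F$-degree leaves precisely the grouplike $K_{\mu-\nu}^{-1}$; once this is in place, the remaining steps are the short eigenvalue comparison above.
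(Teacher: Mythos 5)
Your proof is correct and takes essentially the same route as the paper's: you extract the grouplike $K_{\mu-\nu}^{\pm 1}$ into $C$ from the $F$-leading term via the coideal property, then compare the $\mathrm{ad}(K_{\mu-\nu})$-eigenvalues $q^{(\mu-\nu,\mu)}$, $q^{-(\mu-\nu,\nu)}$ and $1$ on the three summands of the $\mathrm{ad}(T_L)$-weight vector, deriving $\mu+\nu\perp\mu-\nu$ and, if $\lambda_K\lambda_F\neq0$, the contradiction $(\mu-\nu,\mu-\nu)=0$. Your write-up is in fact more explicit than the paper's very terse argument (which compresses the extraction of $K_{\mu-\nu}$ and the positive-definiteness contradiction into single clauses), so nothing further is needed.
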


\begin{proof}
Assume $\lambda_F\neq0$. As $L_{\mu}$ and $L^-_{\nu}K^{-1}_{\mu-\nu}$ are weight vectors it follows, that $K_{\mu-\nu}\in C$. 
The claims follow easy from the fact, that the element is an $\mathrm{ad}(T_L)$-weight vector, i.e. in particular $\mathrm{ad}K_{\mu-\nu}$. 
Thus from $\lambda_K\neq0$ follows $(\mu-\nu,\mu)=0$, this is a contradiction to $\mu\neq0$ and $\lambda_F\neq0$. The second claim, follows in the same way, as $(\mu-\nu,\mu)=-(\mu-\nu,\nu)$ due to the ad$T_L$-stability.
\end{proof}

\begin{lemma}\label{0part}
For the $U^0$-part $T_L$ of an RCS $C$ 
which is generated by elements of the form $\lambda_EE_{\mu}^{\phi_E}K_{\mu}^{-1}+\lambda_F K^{-1}_{\mu-\nu}F_{\nu}^{\phi_F}+\lambda_KK_{\mu}^{-1}$ with $\phi_E$, and $\phi_F$ as above $L\subset (supp(\phi_E)\cup supp(\phi_F))^{\bot}$
and $L\bot K_{\mu+\nu}$. 
\end{lemma}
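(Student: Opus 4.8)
The plan is to read off both constraints on $L$ from the two structural features of the generators: the \emph{character} property of $\phi_E,\phi_F$ will yield the support condition, while the $\mathrm{ad}(T_L)$-\emph{weight vector} property will yield the orthogonality to $K_{\mu+\nu}$. I would establish the support condition first, since it is also what legitimizes the weight computation in the second part.

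For the first claim, recall that $\phi_E$ is by construction a $\k$-algebra homomorphism on the subalgebra $\psi(U^+[w])T_L$, which contains both the $Q$-homogeneous root vectors and the group ring $T_L=\k[L]$. Let $x_\beta$ be $Q$-homogeneous of degree $\beta$ with $\phi_E(x_\beta)\neq0$, so $\beta\in\supp(\phi_E)$, and let $\eta\in L$. In $U$ one has the commutation relation $K_\eta x_\beta K_\eta^{-1}=q^{(\eta,\beta)}x_\beta$. Applying the multiplicative map $\phi_E$, and noting that $\phi_E(K_\eta)$ is a nonzero scalar since $\phi_E(K_\eta)\phi_E(K_{-\eta})=\phi_E(1)=1$ (with $K_{-\eta}\in T_L$ as $L$ is a group), I get $\phi_E(x_\beta)=q^{(\eta,\beta)}\phi_E(x_\beta)$. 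As $q$ is not a root of unity, this forces $(\eta,\beta)=0$. Hence $\supp(\phi_E)\subset L^{\bot}$, i.e.\ $L\subset\supp(\phi_E)^{\bot}$, and the identical argument for the character $\phi_F$ on $U^-[w']T_L$ gives $L\subset\supp(\phi_F)^{\bot}$. Intersecting yields $L\subset(\supp(\phi_E)\cup\supp(\phi_F))^{\bot}$. Equivalently, this is forced by applying Theorem \ref{rcsinu+} to the RCS $C^{\geq0}=\psi(U^+[w'])_{\phi_E}T_L$ from Corollary \ref{leittermwahl} and to its $U^{\leq0}$-counterpart.

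For the second claim, I use that the generator $g=\lambda_EE^{\phi_E}_{\mu}K_\mu^{-1}+\lambda_F K^{-1}_{\mu-\nu}F^{\phi_F}_{\nu}+\lambda_KK^{-1}_{\mu}$ may be taken to be an $\mathrm{ad}(T_L)$-weight vector. Its normalized $E$-leading term $L_\mu=X_\mu K_\mu^{-1}$ with $X_\mu\in U^+_\mu$ satisfies $\mathrm{ad}(K_\eta)L_\mu=q^{(\eta,\mu)}L_\mu$, while its $F$-leading term $L_{-\nu}=Y_{-\nu}K^{-1}_{\mu-\nu}$ with $Y_{-\nu}\in U^-_{-\nu}$ satisfies $\mathrm{ad}(K_\eta)L_{-\nu}=q^{-(\eta,\nu)}L_{-\nu}$, for every $\eta\in L$. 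If both leading terms occur ($\lambda_E,\lambda_F\neq0$), then $g$ being a \emph{single} weight vector forces these weights to agree on all of $L$, i.e.\ $(\eta,\mu)=-(\eta,\nu)$, equivalently $(\eta,\mu+\nu)=0$ for all $\eta\in L$. This is exactly $L\bot K_{\mu+\nu}$. The degenerate cases, where one leading term is absent, reduce to the constraint already recorded in Lemma \ref{muundnu}; note in particular that $K_{\mu-\nu}\in C$, so $\mu-\nu\in L$, and the present claim simply upgrades the orthogonality $\mu+\nu\bot\mu-\nu$ from the single element $\mu-\nu$ to all of $L$.

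The step I expect to require the most care is making the weight computation rigorous: the shifted elements $E^{\phi_E}_{\mu}$ and $F^{\phi_F}_{\nu}$ are not pure root vectors but carry lower-degree correction terms produced by $(\phi_E\otimes\id)\Delta$ and $(\phi_F\otimes\id)\Delta$. One must verify that every such correction term transforms under $\mathrm{ad}(T_L)$ with the \emph{same} weight as its leading term, so that the weight is genuinely read off from $L_\mu$ and $L_{-\nu}$. This is precisely where the first claim feeds back in: since the shift only involves components $x_\beta$ with $\beta\in\supp(\phi_E)\subset L^{\bot}$ (resp.\ for $\phi_F$), each correction term carries the same $\mathrm{ad}(T_L)$-weight as the top term, so $g$ is indeed a clean weight vector and the argument closes. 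This interdependence is the reason the support condition must be settled before the orthogonality to $K_{\mu+\nu}$.
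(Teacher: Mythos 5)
Your proof is correct, but it distributes the work differently from the paper. The paper's own proof is a single sentence: \emph{both} claims follow from the $\mathrm{ad}(T_L)$-stable (weight-vector) choice of the generating system, ``similar to Lemma \ref{muundnu}''. That is, the paper reads off the support condition by the same mechanism as the orthogonality condition: a generator is an $\mathrm{ad}(K_\eta)$-eigenvector for every $\eta\in L$, so all $Q$-degrees occurring among its summands must pair identically with $\eta$; the correction terms of $E^{\phi_E}_{\mu}$ of degree $\mu-\beta$ with $\beta\in\supp(\phi_E)$ then force $(\eta,\beta)=0$, and comparing the $E$-part (degree $\mu$) with the $F$-part (degree $-\nu$) forces $(\eta,\mu+\nu)=0$. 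Your second claim is exactly this argument. For the first claim you argue differently: via multiplicativity of the character (using $K_\eta x_\beta K_\eta^{-1}=q^{(\eta,\beta)}x_\beta$ and $\phi_E(K_\eta)\in\k^\times$), with the classification of Theorem \ref{rcsinu+} applied to $C^{\geq0}$ through Corollary \ref{leittermwahl} as a fallback. This buys you something real: the pure ad-stability route only constrains those $\beta\in\supp(\phi_E)$ that actually appear with nonzero coefficient in the shift of some given generator, whereas your argument (and the citation of Theorem \ref{rcsinu+}) constrains the full support; moreover, your closing paragraph makes explicit why an element of the prescribed form is a clean $\mathrm{ad}(T_L)$-weight vector at all, a point the paper's one-line proof leaves implicit. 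The one caveat is that your direct computation needs $\phi_E$ to be defined on $\psi(U^+[w])T_L$ (as in the paper's Section 3 summary) rather than merely on $\psi(U^+[w])$ (as in Conjecture \ref{basistheorem}); your fallback via Theorem \ref{rcsinu+} covers that case, so no gap remains.
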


\begin{proof}
The claims follow from the ad$T_L$ stable choice of the generating system similar to Lemma \ref{muundnu}.



\end{proof}

\section{Examples}
\subsection{Right coideal subalgebras of $U_q(\sl_2)$}
\subsubsection*{Data of $U_q(\sl_2)$}
The smallest example of a qantum group is $U_q(\sl_2)$. It is the universal enveloping algebra of the Lie algebra $\sl_2$ with root system $\Phi=\{\alpha,-\alpha\}$. It is generated as an algebra by the elements $E_{\alpha},F_{\alpha},K_{\alpha},K_{\alpha}^{-1}$ with the following relations 
$$[E_{\alpha},F_{\alpha}]_1=
\frac{K_{\alpha}-K_{\alpha}^{-1}}{q-q^{-1}}\quad [E_{\alpha},K_{\alpha}]_{q^{-2}}=[F_{\alpha},K_{\alpha}]_{q^2}=0$$
where $[x,y]_q:=xy-qyx$ is the q-commutator. $U_q(\sl_2)$ is equipped with the following Hopfalgebra structure
 \begin{align*}
 \Delta(E_{\alpha})=E_{\alpha}\otimes 1+K_{\alpha}\otimes E_{\alpha}\quad 
  \Delta(F_{\alpha})=F_{\alpha}\otimes K_{\alpha}^{-1}+1\otimes F_{\alpha}\quad
  \Delta(K_{\alpha})=K_{\alpha}\otimes K_{\alpha}
  \end{align*}
For simplicity we write in the following $E,F,K$ and $K^{-1}$ for the generators $E_{\alpha},F_{\alpha}$, $K_{\alpha},K_{\alpha}^{-1}$.

Due to chapter \ref{main} a list of possible generating elements is up to symmetry in E and F given by:
\begin{enumerate}
\item $K^i$
\item $EK^{-1}$
\item $EK^{-1}+\lambda K^{-1}$
\item $EK^{-1}+c_FF +c_KK^{-1}$
\end{enumerate}

\subsubsection*{Right coideal subalgebras of $U_q(\sl_2)$}
Obviously $U_q(\sl_2)$ itself and the smaller version $\langle EK^{-1},K^{2}, K^{-2},F\rangle$ are RCS. The only \emph{homogeneous} RCS are the Standard Borel subalgebras $U^{\geq0}$ and $U^{\leq0}$ and $U^0$.

In $U^{\geq0}$ resp. $U^{\leq0}$ there exist moreover the RCS $\langle EK^{-1}, K^j\rangle$ and $\langle F, K^j\rangle$ for some $j$
and there exist families of character shifted RCS
$\langle EK^{-1}\rangle_{\phi^+}$ resp. $\langle F\rangle_{\phi^-}$ for characters on $\langle EK^{-1}\rangle$ resp. $\langle F\rangle$ given by $\phi^+(EK^{-1})=\lambda$ and $\phi^-(F)=\lambda'$.
They have the form $\langle EK^{-1}\rangle_{\phi^+}=\langle EK^{-1}+\lambda K^{-1}\rangle$ and $\langle F\rangle_{\phi^-}=\langle F+\lambda'K^{-1}\rangle$.
Moreover there exists of course an RCS of the form 
$\langle EK^{-1}+c_FF+c_KK^{-1}\rangle$.

Next to those obvious RCS there exists also a special RCS $B=\langle EK^{-1}+\lambda K^{-1}, F+\lambda'K^{-1}\rangle$ with $\lambda\lambda'=\frac{q^2}{(1-q^2)(q-q^{-1})}$. 
The $q$-commutator of the generators is:
\begin{align}\label{kommutator}
[EK^{-1}+\lambda K^{-1},F+\lambda'K^{-1}]_{q^2}=\frac{q^2}{q-q^{-1}}(1-K^{-2})+(1-q^2)\lambda \lambda' K^{-2}=\frac{q^2}{q-q^{-1}}1
\end{align}
$B$ is closed under multiplication and thus an RCS, it is even isomorphic to the Weylalgebra and thus any finite dimensional irreducible representation is one dimensional.
With the results of this paper we could prove in \cite{Vocke16} that $B$ is even a Borel subalgebra (i.e. an RCS maximal with the properyt that each irreducible finite dimensional representation is one dimensional).\\

Note, that different right coideal subalgebras $B$ for different choices of $\lambda,\lambda'$ can be mapped into each other via the Hopf-Automorphism $E\mapsto tE,\;F\mapsto t^{-1}F$.

\subsubsection*{Classification of right coideal subalgebras}

We now prove that the previous list of RCS is complete. We restrict to the case of right coideal subalgebras $C$ whith the property $C^0:=C\cap U^0$ is a sub Hopfalgebra, and use the knowledge obtained in the present paper on generating systems to classify all RCS of $U_q(\sl_2)$:
We consider the set of all possible generating elements,
as in Lemma \ref{leittermwahl}. We know there exists a generating system for each RCS whose elements lie in the set $EZS:=\{K^i,c_EEK^{-1}+c_FF+c_KK^{-1}\}$ for $i\in\mathbf{Z}$ and constants $c_E,c_F,c_K\in k$. Each such generating system of an arbitrary right coideal subalgebra contains at most three elements, that is one with $E$-leading term in the root $\alpha$, one with $F$-leading term in the root $\alpha$ and one in $U^0$.\\

RCS $C$ with $C\cap U^0=\emptyset$: From (\ref{kommutator}) we know, that the only RCS with $C\cap U^0=\emptyset$ which does not lie in $U^{\geq0}$ or in $U^{\leq0}$, is either $\langle EK^{-1}+\lambda F+\lambda' K^{-1}\rangle$ for some $\lambda,\lambda'$ or is of the form $\langle EK^{-1}+\lambda K^{-1}, F+\lambda'K^{-1}\rangle$ with $\lambda\lambda'=\frac{q^2}{(1-q^2)(q-q^{-1})}$.\\

RCS $C$ with $C\cap U^0\neq\emptyset$: There is a $j$ with $K^j\in C$, thus due to Lemma \ref{0part} there can be no character-shifted generating element of $C$ nor a generating element with nontrivial leading terms in $E$ and $F$. Thus $C$ either lies in $U^{\geq0}$ or in $U^{\leq0}$ or contains as well $EK^{-1}$ as $F$ and thus also $K^2$ and $K^{-2}$, then $C$ is either $\langle EK^{-1},K^{2}, K^{-2},F\rangle$ or $U_q(\sl_2)$ itself.

\subsubsection*{On representation theory of RCS of $U_q(\sl_2)$}
In \cite{Vocke16} we used the results of this paper to prove some interesting results on Borel subalgebras and representation theory of RCS in $U_q(\sl_2)$:
\begin{satz}[\cite{Vocke16} Theorem 9.1]
There are two different types of Borel subalgebras of the $U_q(\sl_2)$: the Standard-Borel subalgebras $U^{\geq0}$ and $U^{\leq0}$ 
and the Borel subalgebra $B:=\langle EK^{-1}+\lambda K^{-1}, F+\lambda'K^{-1}\rangle$ with $\lambda\lambda'=\frac{q^2}{(1-q^2)(q-q^{-1})}$.
\end{satz}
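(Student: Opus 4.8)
The plan is to build directly on the complete classification of right coideal subalgebras of $U_q(\sl_2)$ just obtained, and to decide—one RCS at a time—which of them satisfies the defining property of a Borel subalgebra, namely that every finite–dimensional simple module is one–dimensional; among those I then isolate the maximal ones using the containment lattice of the classification. Since the list of RCS is finite in shape (everything is built from $K^{\pm1}$ and from elements $c_EEK^{-1}+c_FF+c_KK^{-1}$), this is a finite bookkeeping once the representation theory of the three candidates is settled.

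First I would dispose of the standard Borel parts and everything below them. For $U^{\geq0}=\langle E,K^{\pm1}\rangle$ the relation $KE=q^{2}EK$ shows that $E$ multiplies the $K$–weight by $q^{2}$; as $q$ is not a root of unity, on a finite–dimensional module $E$ must act nilpotently, hence as $0$ on a simple module, which then collapses to a one–dimensional $\langle K\rangle$–module. The symmetric argument handles $U^{\leq0}$, and the very same nilpotency argument shows that all sub-RCS of $U^{\geq0}$ or $U^{\leq0}$ (the families $\langle EK^{-1},K^{j}\rangle$, the character–shifted $\langle EK^{-1}+\lambda K^{-1}\rangle$, the single–generator algebras and $U^{0}$) also have the one–dimensionality property; but each of these is properly contained in a standard Borel that still has the property, so none of them is maximal.

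The substantial step—and the one I expect to be the main obstacle—is the algebra $B=\langle a,b\rangle$ with $a=EK^{-1}+\lambda K^{-1}$ and $b=F+\lambda'K^{-1}$, whose generators obey the quantum–Weyl relation $ab-q^{2}ba=c$ with $c=\tfrac{q^{2}}{q-q^{-1}}\neq0$ coming from (\ref{kommutator}). One cannot simply invoke ``$B$ is a Weyl algebra'': unlike the classical Weyl algebra this quantum version does have one–dimensional modules (characters with $\chi(a)\chi(b)=\lambda\lambda'$) and admits many finite–dimensional modules, so the content is to rule out simple modules of dimension $\geq2$. I would argue in two moves. First, $b$ cannot act nilpotently on a nonzero finite–dimensional module: if $bv_{0}=0$ then the vectors $v_{k}:=a^{k}v_{0}$ satisfy $bv_{k}\in (\C\setminus\{0\})\,v_{k-1}$, they are linearly independent (apply $b^{k}$), and the relation forbids the tower from terminating—a hypothetical top $av_{m}=0$ would force $q^{2(m+1)}=1$, impossible for generic $q$—contradicting finite–dimensionality. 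Hence $b$ has a nonzero eigenvalue, and since $\beta\mapsto q^{-2}\beta$ has no finite orbit in $\C\setminus\{0\}$ there is a nonzero eigenvalue $\beta_{1}$ with $q^{-2}\beta_{1}\notin\mathrm{Spec}(b)$. For an eigenvector $v$ of $b$ with eigenvalue $\beta_{1}$ one computes $(b-q^{-2}\beta_{1})(av)=-q^{-2}c\,v$, so that $av\in\C v$; thus $v$ is a common eigenvector of $a$ and $b$ and $\C v$ is a one–dimensional submodule, forcing any simple finite–dimensional $B$–module to be one–dimensional.

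Finally I would establish maximality and eliminate the remaining RCS via the classification. The algebras $U_q(\sl_2)$ and the intermediate $\langle EK^{-1},K^{2},K^{-2},F\rangle$ both fail the property, because the standard two–dimensional module stays simple over each: $K^{2}$ separates the two weight lines while $E$ and $F$ interchange them, so no proper nonzero submodule exists. For maximality I use that the only RCS properly containing $U^{\geq0}$ (resp. $U^{\leq0}$) is $U_q(\sl_2)$, since any larger RCS with full $U^{0}$ must, by the classification, be the intermediate algebra (which lacks $K^{\pm1}$, hence does not contain $U^{\geq0}$) or $U_q(\sl_2)$ itself. Likewise, by Lemma \ref{0part} together with the classification, the character–shifted generators of $B$ are incompatible with adjoining any further $K^{j}$, so the only RCS properly containing $B$ is $U_q(\sl_2)$. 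As both enlargements violate the one–dimensionality property, $U^{\geq0}$, $U^{\leq0}$ and $B$ are precisely the maximal RCS enjoying it, which is the claimed statement.
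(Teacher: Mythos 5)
Your overall strategy is exactly the one the paper (and its reference [Vocke16]) uses: run through the classification of RCS of $U_q(\sl_2)$ obtained in the paper, check the one--dimensionality property case by case, and settle maximality by inspecting which RCS contain which. Where you genuinely add value is the treatment of $B$: the paper merely asserts that $B$ ``is isomorphic to the Weyl algebra and thus any finite dimensional irreducible representation is one dimensional,'' which as stated is misleading (the classical Weyl algebra has \emph{no} nonzero finite--dimensional modules in characteristic $0$, whereas $B$ has plenty). Your spectral argument --- $b$ is injective on any finite--dimensional module because a vector in $\Ker b$ generates an infinite tower $a^k v_0$ with $b\,a^k v_0 = -q^{-2}c\,\frac{1-q^{-2k}}{1-q^{-2}}\,a^{k-1}v_0 \neq 0$; then choosing a nonzero eigenvalue $\beta_1$ of $b$ with $q^{-2}\beta_1\notin\mathrm{Spec}(b)$ and using $(b-q^{-2}\beta_1)(av)=-q^{-2}c\,v$ to produce a common eigenvector --- is correct and supplies precisely the step the paper delegates to [Vocke16].

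There is one soft spot in your maximality argument for $B$. Lemma \ref{0part} plus the classification only rules out proper extensions $C\supsetneq B$ with \emph{nontrivial} $U^0$--part (such $C$ would lie in $U^{\geq0}$, $U^{\leq0}$, the intermediate algebra, or be all of $U_q(\sl_2)$). You must still exclude extensions with trivial $U^0$--part, which by the classification are either a single--generator algebra $\langle EK^{-1}+cF+c'K^{-1}\rangle$ or another $B_{\mu,\mu'}$. The first case is easy: an algebra generated by a single element is commutative, while $B$ is not. The second case (no proper nesting $B_{\lambda,\lambda'}\subsetneq B_{\mu,\mu'}$) needs an actual argument, e.g.: both algebras have PBW bases $\{a^i b^j\}$ whose leading terms in the filtration of $U$ by total $E$-- plus $F$--degree are $E^iK^{-i-j}F^j$ up to scalars, so $\dim\bigl(B_{\lambda,\lambda'}\cap F_nU\bigr)=\dim\bigl(B_{\mu,\mu'}\cap F_nU\bigr)=\tfrac{(n+1)(n+2)}{2}$ for every $n$, and a containment of subspaces of equal finite dimension in each filtration step forces equality. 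With this case added, your proof is complete and matches the intended argument.
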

In the proof of the maximality we use the knowledge of all RCS in $U_q(\sl_2)$.
\begin{Bsp}
If we restrict the finite dimensional irreducible representations $L(\lambda)$ on $U$ to a Borel subalgebra $B$ the representation is no more irreducible, but not necessary semi simple. In the case of a Borel subalgebra all the composite factors have to be one dimensional.\\

In $U_q(\sl_2)$: Given a highest weight vector of $L(\lambda)$ to the only dominant weight 
$\lambda=\frac{1}{2}\alpha$ and let $x_0:=v_{\lambda},~x_1:=v_{\lambda-\alpha}$, then the elements in $B$ act as follows:
\begin{align*}
& (EK^{-1}+\lambda_E K^{-1}).x_0=\lambda q^{-1}x_0\quad &(EK^{-1}+\lambda_EK^{-1}).x_1=\lambda qx_1+qx_0\\
& (F+\lambda'K^{-1}).x_0=x_1+\lambda'q^{-1}x_0\quad &(F+\lambda'K^{-1}).x_1=\lambda'qx_1
\end{align*}

One can easily check, that $L(\lambda)$ has a one dimensional submodule $\langle x_0+\lambda(1-q^{-2})x_1\rangle $ to the eigenvalue $\lambda q$ of $EK^{-1}+\lambda_EK^{-1}$
resp. $\lambda' q^{-1}$ of $F+\lambda'K^{-1}$ 
and the one dimensional quotient module 
$L(\lambda)/(x_0+\lambda(1-q^{-2})x_1)$ with Eigenvalues $\lambda q^{-1}$ resp. $\lambda' q$ . 
\end{Bsp}
\subsection{Right coideal subalgebras of $U_q(\sl_3)$}

\subsubsection*{Data of $U_q(\sl_3)$}\label{relationenkapitelsl3}

 $U_q(\sl_3)$ is the quantized universal enveloping algebra of the Lie algebra $\sl_3$ with the root system $\Phi=\{\alpha,\beta,\alpha+\beta,-\alpha,-\beta,-\alpha-\beta\}$ and basis $\Pi=\{\alpha,\beta\}$. 
The root vectors for the simple roots are the generating elements $E_{\alpha},E_{\beta}$ and $F_{\alpha},F_{\beta}$ and those for the root $\alpha+\beta$, depending on the underlying reduced representation, are given by:
\begin{align*}
 &E_{\alpha\beta}:=T_{\alpha}(E_{\beta})=E_{\alpha}E_{\beta}-q^{-1}E_{\beta}E_{\alpha}~ &E_{\beta\alpha}:=T_{\beta}(E_{\alpha})=-q^{-1}(E_{\alpha}E_{\beta}-qE_{\beta}E_{\alpha})\\
 &F_{\alpha\beta}:=T_{\alpha}(F_{\beta})=-q(F_{\alpha}F_{\beta}-q^{-1}F_{\beta}F_{\alpha})~    &F_{\beta\alpha}:=T_{\beta}(F_{\alpha})=F_{\alpha}F_{\beta}-qF_{\beta}F_{\alpha}
\end{align*}
The quantum group is symmetric in $\alpha$ and $\beta$ and with the map $\omega$ also symmetric in $E$ and $F$. For this reason, in the following we will only calculate the relations which do not result from symmetry from the others.
The q-commutator ($[x,y]:=xy-qyx$) relations of the root vectors $E_{\alpha\beta}$ and $E_{\beta\alpha}$ are:
\begin{align*}
 &[E_{\alpha\beta},F_{\alpha\beta}]_1=\frac{K_{\alpha+\beta}-K_{\alpha+\beta}^{-1}}{q-q^{-1}}\quad&\quad& \\
 &[E_{\alpha\beta},F_{\alpha}]_1=-E_{\beta}K_{\alpha}^{-1}\quad&[E_{\beta\alpha},F_{\alpha}]_1=q^{-1}E_{\beta}K_{\alpha}\quad&\\
&[E_{\alpha\beta},E_{\alpha}]_{q^{-1}}=0 \quad&[E_{\beta\alpha},E_{\alpha}]_{q}=0 \quad&
\end{align*}
The comultiplication is given by:
$$\Delta(E_{\alpha})=E_{\alpha}\otimes 1+K_{\alpha}\otimes E_{\alpha}\quad \Delta(F_{\alpha})=F_{\alpha}\otimes K_{\alpha}^{-1}+1\otimes F_{\alpha}$$
\begin{align*}
&\Delta(E_{\alpha\beta})=E_{\alpha\beta}\otimes 1+K_{\alpha+\beta}\otimes E_{\alpha\beta}+(1-q^{-2})E_{\alpha}K_{\beta}\otimes E_{\beta}\\
&\Delta(F_{\alpha\beta})=F_{\alpha\beta}\otimes K_{\alpha+\beta}^{-1}+1\otimes F_{\alpha\beta}+(q^{-1}-q)F_{\beta}\otimes F_{\alpha}K_{\beta}^{-1}
\end{align*}

Now we want to classify all right coideal subalgebras of $U_q(\sl_3)$. Therefore we use the knowledge of the underlying paper and 
first specify all possible generating elements of any RCS $C$ in $U_q(\sl_3)$ with the property $C^0:=C\cap U^0$ is a sub Hopfalgebra. In the following we will assume $C$ to be an RCS of $U_q(\sl_3)$ with this property.
\subsubsection*{Possible generating elements of an RCS of $U_q(\sl_3)$}
From Theorem \ref{hauptsatzgenerator} we know, that we can choose a generating system of $C$, whose elements have a very special form. We now want to specify these possible elements explicitly. In $U_q(\sl_3)$ up to symmetry in $E$ and $F$ resp. $\alpha$ and $\beta$ we can give the complete list of all possible generating elements of $C$:

\begin{List}[possible generating elements]\label{listebasis}\leavevmode\newline
\begin{enumerate}

\item homogeneuos elements in $U^{\geq0}$
\begin{enumerate}
\item elements in $U^0$
 \item root vectors in $U^{\geq0}$: \begin{itemize}
            \item $E_{\alpha}K_{\alpha}^{-1}$
            \item $E_{\alpha\beta}K_{\alpha+\beta}^{-1}$
            \end{itemize}
 \end{enumerate}
 \item character- shifted root vectors in $U^{\geq0}$: \begin{enumerate}
 \item $E_{\alpha\beta}K_{\alpha+\beta}^{-1}+\lambda_{\alpha}(1-q^{-2})E_{\beta}K_{\alpha+\beta}^{-1}$\\
  for $\lambda_{\alpha}\in k^*$
                       \begin{itemize}
                       \item RCS that contain this element also contain $E_{\alpha}K_{\alpha}^{-1}+\lambda_{\alpha}K_{\alpha}^{-1}$
                       \end{itemize}

\item $E_{\alpha\beta}K_{\alpha+\beta}^{-1}+\lambda_{\alpha\beta}K_{\alpha+\beta}^{-1}$\\
 for $\lambda_{\alpha\beta}\in k^*$
                       \begin{itemize}
                       \item RCS that contain this element also contain $E_{\alpha}K_{\alpha}^{-1}$
                       \end{itemize}

\item $E_{\alpha}K_{\alpha}^{-1}+\lambda_{\alpha}K_{\alpha}^{-1}$\\
 for $\lambda_{\alpha}\in k^*$
                       
                       
                       \end{enumerate}

 \item elements in $U$ with non trivial F- and E-leading term 
 \begin{enumerate}
 
\item $E_{\alpha\beta}K_{\alpha+\beta}^{-1}+\lambda_{\alpha}^+(1-q^{-2})E_{\beta}K_{\alpha+\beta}^{-1}+c_FF_{\beta\alpha}+c_F(q^{-1}-q)\lambda_{\alpha}^-F_{\beta}K_{\alpha}^{-1}$\\ 
for $c_F\in k^*$ and $\lambda_{\alpha}^+,\lambda_{\alpha}^-\in k$ with $\lambda_{\alpha}^-\lambda_{\alpha}^+=\frac{q^2}{(1-q^2)(q-q^{-1})}$
\begin{itemize}
     \item RCS that contain this element also contain $E_{\alpha}K_{\alpha}^{-1}+\lambda_{\alpha}^+K_{\alpha}^{-1}$ and $F_{\alpha}+\lambda_{\alpha}^-K_{\alpha}^{-1}$
\end{itemize}

\item $E_{\alpha\beta}K_{\alpha+\beta}^{-1}+\lambda_{\alpha}(1-q^{-2})E_{\beta}K_{\alpha+\beta}^{-1}+c_FF_{\alpha\beta}+c_F\lambda_{\beta}^-(q^{-1}-q)F_{\alpha}K_{\beta}^{-1}+c_KK_{\alpha+\beta}^{-1}$\\
for $c_F\in k^*$ and $\lambda_{\alpha}^+,\lambda_{\beta}^-,c_K\in k$ with $\lambda_{\alpha}^+\lambda_{\beta}^-=\lambda_{\beta}^-c_K=\lambda_{\alpha}^+c_K=0$
\begin{itemize}
     \item RCS that contain this element also contain $E_{\alpha}K_{\alpha}^{-1}+\lambda_{\alpha}^+K_{\alpha}^{-1}$ and 	 $F_{\beta}+\lambda_{\beta}^-K_{\beta}^{-1}$
\end{itemize}

\item $E_{\alpha\beta}K_{\alpha+\beta}^{-1}+c_{\alpha}(1-q^{-2})q^{-1} F_{\alpha}E_{\beta}K_{\beta}^{-1}-c_{\alpha}c_{\beta}q^{-2}F_{\alpha\beta}+(1-q^{-2})q^{-1}c_{\beta}'c_{\alpha}F_{\alpha}K_{\beta}^{-1}+c_{\alpha}'(1-q^{-2})E_{\beta}K_{\alpha+\beta}^{-1}+c_{\alpha}'c_{\beta}'(1-q^{-1})q^{-1}K_{\alpha+\beta}^{-1}$\\
for $c_{\alpha},c_{\beta}\in k^*$ and $c_{\alpha}',c_{\beta}'\in k$ are either $0$ or determined
 by $c_{\alpha},c_{\beta}$
\begin{itemize}
     \item RCS that contain this element also contain $E_{\alpha}K_{\alpha}^{-1}+c_{\alpha}F_{\alpha}+c_{\alpha}'K_{\alpha}^{-1}$ and $E_{\beta}K_{\beta}^{-1}+c_{\beta}F_{\beta}+c_{\beta}'K_{\beta}^{-1}$
\end{itemize} 
 
\item $E_{\beta\alpha}-(1-q^2)c_{\alpha}E_{\alpha}F_{\alpha}K_{\beta}^{-1}-qc_{\alpha}c_{\beta}F_{\alpha\beta}+ \frac{qc_{\alpha}-c_{\beta}}{1-q^{-2}}K_{\alpha+\beta}^{-1}$\\
 for $c_{\alpha},c_{\beta}\in k^*$
\begin{itemize}
\item RCS that contain this element also cointain $E_{\beta}K_{\alpha}^{-1}+c_{\alpha}F_{\alpha}$ and $E_{\alpha}K_{\beta}^{-1}+c_{\beta}F_{\beta}$ and $K_{\alpha}K_{\beta}^{-1}$
\end{itemize}

 \item $E_{\alpha}K_{\beta}^{-1}+c_F F_{\beta}$\\
 for $c_F\in k^*$
               \begin{itemize}
                       \item RCS that contain this element also contain $K_{\beta}^{-1}K_{\alpha}$
                       \end{itemize}

 \item $E_{\alpha}K_{\alpha+\beta}^{-1}+c_FF_{\alpha\beta}$\\
 for $c_F\in k^*$
               \begin{itemize}
                       \item RCS that contain this element also contain $K_{\beta}^{-1}$, $F_{\beta}$
                       \end{itemize}

 \item $E_{\alpha}K_{\alpha}^{-1}+c_FF_{\alpha}+c_KK_{\alpha}^{-1}$\\
 for $c_F\in k^*$ and $c_K\in k$

     \end{enumerate}
 \end{enumerate}
 \end{List}

\subsubsection*{Right coideal subalgebras of $U_q(\sl_3)$}\label{kapitelrcssl3}
We now want to give a complete list of all RCS of $U_q(\sl_3)$ with the property $C^0:=C\cap U^0$ is a sub Hopfalgebra. Therefore we will first specify the homogeneous RCS ($U^0\subset C$) in the positive Borel Part and give a classification of homogeneous RCS, as done in \cite{HK11a}. This gives us easily a list of all homogeneous generated RCS of $U_q(\sl_3)$. 
Then we will use \cite{HK11b} for explicitly specifying all RCS in $U_q(\sl_3^{\geq0})$. We then give a complete list of all RCS which are generated by these socalled character- shifted elements (i.e. generated by elements of type \ref{listebasis} 2)).

Lastly, we use the results of this paper to consider general RCS by giving a list of all RCS which are generated by at least one element of type \ref{listebasis} 3).

\paragraph*{Homogeneous RCS of the positive Borel part of $U_q(\sl_3)$}
In the positive Borelpart of the $U_q(\sl_3)$ lie up to isomorphism exactly six types of homogeneous RCS, i.e. one per Weyl group element:
\begin{itemize}
\item $U^0$
\item $ U^+[s_{\alpha}]U^0=\langle E_{\alpha},U^0\rangle$
\item $ U^+[s_{\beta}]U^0=\langle E_{\beta},U^0\rangle$
\item $ U^+[s_{\alpha}s_{\beta}]U^0=\langle E_{\alpha},E_{\alpha\beta},U^0\rangle$
\item $ U^+[s_{\beta}s_{\alpha}]U^0=\langle E_{\beta},E_{\beta\alpha},U^0\rangle$ 
\item $U^{\geq0}$
\end{itemize}
 The RCS in $U^{\leq0}$ are constructed analogously. If we want to construct homogeneous generated RCS which are not necessary homogeneous (in the sense $U^0\subset C$), we can exchange $U^+[s]$ by $\psi(U^+[s])$ and $U^0$ by $T_L$ for some subgroup $L\subset \Phi$.
 
\paragraph*{Homogeneous RCS of $U_q(\sl_3)$}
From \cite{HK11b} we know all homogeneous RCS in $U$. If we consider RCS generated by homogeneous elements we get up to isomorphism and symmetry the following list of remaining homogeneous RCS: 
\begin{itemize}
\item $\psi(U^+[s_{\alpha}])T_LU^-[s_{\alpha}]$ for $K_{\alpha}^2\subset L$
\item $\psi(U^+[s_{\alpha}s_{\beta}])T_LU^-[s_{\alpha}]$ for $K_{\alpha}^2\subset L$
\item $\psi(U^+[s_{\beta}])T_LU^-[s_{\alpha}]$ 
\item $\psi(U^+[s_{\beta}s_{\alpha}])T_LU^-[s_{\alpha}]$ for $K_{\alpha}^2\subset L$
\item $\psi(U^+)T_LU^-[s_{\alpha}]$
\item $\psi(U^+[s_{\alpha}s_{\beta}])T_LU^-[s_{\alpha}s_{\beta}]$ for $K_{\alpha}^2, K_{\beta}^2\subset L$
\item $\psi(U^+[s_{\beta}s_{\alpha}])T_LU^-[s_{\alpha}s_{\beta}]$ for $K_{\alpha+\beta}^2\subset L$
\item $\psi(U^+)T_LU^-[s_{\alpha}s_{\beta}]$ for $K_{\alpha}^2, K_{\beta}^2\subset L$
\item $\psi(U^+)T_LU^-$ for $K_{\alpha}^2, K_{\beta}^2\subset L$
\end{itemize}

 \paragraph*{RCS in the positive Borelpart of $U_q(\sl_3)$}
 The RCS $C$ in $U^{\geq0}$ are by \cite{HS09} generated via character shifting. 
We give here explicitly up to isomorphism all types of non homogeneous RCS in the positive Borelpart of $ U_q(\sl_3) $.
Consider first the connected RCS in $C$ (i.e. those for which $U^0\cap C=k\cdot1$).\\

There is the RCS $ \langle E_{\alpha}K_{\alpha}^{-1}\rangle_{\phi_{\alpha}}$ with character $\phi_{\alpha}$ defied by 
$\phi_{\alpha}(E_\alpha K_{\alpha}^{-1})=\lambda_{\alpha}$.
Moreover there is the RCS $\langle E_{\alpha}K_{\alpha}^{-1},E_{\alpha\beta}K_{\alpha+\beta}^{-1}\rangle_{\phi_{\alpha}}$ 
 with character defined by
 $\phi_{\alpha}(E_\alpha K_{\alpha}^{-1})=\lambda_{\alpha}$ and $\phi_{\alpha}(E_{\alpha\beta}K_{\alpha+\beta}^{-1})=0$, 
moreover there is the RCS $ \langle E_{\alpha}K_{\alpha}^{-1},E_{\alpha\beta}K_{\alpha+\beta}^{-1}\rangle_{\phi_{\alpha\beta}}$ for a character $\phi_{\alpha\beta}$ on $\langle E_{\alpha}K_{\alpha}^{-1},E_{\alpha\beta}K_{\alpha+\beta}^{-1}\rangle$
with $\phi_{\alpha\beta}(E_{\alpha}K_{\alpha}^{-1})=0$ and $\phi_{\alpha\beta}(E_{\alpha\beta}K_{\alpha+\beta}^{-1})=\lambda_{\alpha\beta}$. \\ 
%
All RCS $C$ in $U_q(\sl_3)^{\geq0}$ have the form $C=C^+T_L$ for a $C^+$ as above. 
 Due to \cite{HK11b} the following restriction holds for $T_L$: 
$L \subset \supp(\phi)^{\bot}$. Thus in the cases above $T_L$ can have the following form: In the case of $\phi_{\alpha}$ $T_L$ lies in $\langle K_{2\beta+\alpha},K_{2\beta+\alpha}^{-1}\rangle$.
In the case of $\phi_{\alpha+\beta}$ $T_L$ lies in $\langle K_{\alpha-\beta},K_{\alpha-\beta}^{-1}\rangle$. 
The character-shifted RCS of the $ U_q(\sl_3)^ {\leq0} $ are constructed similarly.

To give now a complete list of RCS in $U_q(\sl_3)$ we consider step by step RCS with generating elements in the list \ref{listebasis}.
In particular we list to each generating element all RCS up to symmetry in E and F resp. in $\alpha$ and $\beta$ which contain this element in their generating set, such that it cannot be written as a sum of other generating elements.\\

As we already know all homogeneous RCS, we can restrict to the case of an RCS $C$ which is not generated by only homogeneous elements, thus it contains at least one generating element from point 2) or 3) in \ref{listebasis}. First we consider those RCS which are generated only by generating elements from point 1) or 2), thus every generating element lies either in $U^{\geq0}$ or in $U^{\leq0}$. We list all possible RCS containing an element of 2) step by step up to symmetry in E and F resp. $\alpha$ and $\beta$.

\begin{itemize}
\item[2a)] For an arbitrary $0\neq \lambda_{\alpha},\lambda_{\alpha}^-\in k*$ with $\lambda_{\alpha}\lambda_{\alpha}^-==\frac{q^2}{(1-q^2)(q-q^{-1})}$ and $i\in \mathbb{N}$ there exist the following RCS :
\begin{itemize}
\item $\langle E_{\alpha\beta}K_{\alpha+\beta}^{-1}+\lambda_{\alpha}(1-q^{-2})E_{\beta}K_{\alpha+\beta}^{-1},E_{\alpha}K_{\alpha}^{-1}+\lambda_{\alpha}K_{\alpha}^{-1}, (K_{\alpha}K_{\beta}^2)^i\rangle$
\item $\langle E_{\alpha\beta}K_{\alpha+\beta}^{-1}+\lambda_{\alpha}(1-q^{-2})E_{\beta}K_{\alpha+\beta}^{-1},E_{\alpha}K_{\alpha}^{-1}+\lambda_{\alpha}K_{\alpha}^{-1}, K_{\alpha}K_{\beta}^2, F_{\beta}\rangle$
\item $\langle E_{\alpha\beta}K_{\alpha+\beta}^{-1}+\lambda_{\alpha}(1-q^{-2})E_{\beta}K_{\alpha+\beta}^{-1},E_{\alpha}K_{\alpha}^{-1}+\lambda_{\alpha}K_{\alpha}^{-1}, K_{\alpha}K_{\beta}^2, F_{\beta}, F_{\alpha\beta}\rangle$
\item $\langle E_{\alpha\beta}K_{\alpha+\beta}^{-1}+\lambda_{\alpha}(1-q^{-2})E_{\beta}K_{\alpha+\beta}^{-1},E_{\alpha}K_{\alpha}^{-1}+\lambda_{\alpha}K_{\alpha}^{-1}, (K_{\alpha}K_{\beta}^2)^i, F_{\alpha}+\lambda_{\alpha}^-K_{\alpha}^{-1}\rangle$
\item $\langle E_{\alpha\beta}K_{\alpha+\beta}^{-1}+\lambda_{\alpha}(1-q^{-2})E_{\beta}K_{\alpha+\beta}^{-1},E_{\alpha}K_{\alpha}^{-1}+\lambda_{\alpha}K_{\alpha}^{-1}, (K_{\alpha}K_{\beta}^2)^i, F_{\alpha}+\lambda_{\alpha}^-K_{\alpha}^{-1}, F_{\beta\alpha}+(q-q^{-1})\lambda_{\alpha}^-F_{\beta}K_{\alpha}^{-1}\rangle$
\item $\langle E_{\alpha\beta}K_{\alpha+\beta}^{-1}+\lambda_{\alpha}(1-q^{-2})E_{\beta}K_{\alpha+\beta}^{-1},E_{\alpha}K_{\alpha}^{-1}+\lambda_{\alpha}K_{\alpha}^{-1}, (K_{\alpha}K_{\beta}^2)^i, E_{\beta}K_{\beta}^{-1}\rangle$
\item $\langle E_{\alpha\beta}K_{\alpha+\beta}^{-1}+\lambda_{\alpha}(1-q^{-2})E_{\beta}K_{\alpha+\beta}^{-1},E_{\alpha}K_{\alpha}^{-1}+\lambda_{\alpha}K_{\alpha}^{-1}, (K_{\alpha}K_{\beta}^2)^i, F_{\alpha}+\lambda_{\alpha}^-K_{\alpha}^{-1}, E_{\beta}K_{\beta}^{-1}\rangle$
\item $\langle E_{\alpha\beta}K_{\alpha+\beta}^{-1}+\lambda_{\alpha}(1-q^{-2})E_{\beta}K_{\alpha+\beta}^{-1},E_{\alpha}K_{\alpha}^{-1}+\lambda_{\alpha}K_{\alpha}^{-1}, K_{\alpha}K_{\beta}^2, F_{\alpha}+\lambda_{\alpha}^-K_{\alpha}^{-1}, E_{\beta}K_{\beta}^{-1}, F_{\beta\alpha}+(q-q^{-1})\lambda_{\alpha}^-F_{\beta}K_{\alpha}^{-1}\rangle$
\end{itemize}
\item[2b)] For an arbitrary $0\neq\lambda_{\alpha\beta}\lambda_{\alpha\beta}^-\in k*$ with $\lambda_{\alpha\beta}\lambda_{\alpha\beta}^-==\frac{q^2}{(1-q^2)(q-q^{-1})}$ and $i\in \mathbb{N}$ there exist the following RCS:
\begin{itemize}
\item $\langle E_{\alpha\beta}K_{\alpha+\beta}^{-1}+\lambda_{\alpha\beta}K_{\alpha+\beta}^{-1},E_{\alpha}K_{\alpha}^{-1}, (K_{\alpha}K_{\beta}^{-1})^i\rangle$
\item $\langle E_{\alpha\beta}K_{\alpha+\beta}^{-1}+\lambda_{\alpha\beta}K_{\alpha+\beta}^{-1},E_{\alpha}K_{\alpha}^{-1}, (K_{\alpha}K_{\beta}^{-1})^i, F_{\beta}\rangle$
\item $\langle E_{\alpha\beta}K_{\alpha+\beta}^{-1}+\lambda_{\alpha\beta}K_{\alpha+\beta}^{-1},E_{\alpha}K_{\alpha}^{-1}, (K_{\alpha}K_{\beta}^{-1})^i, F_{\beta}, F_{\alpha\beta}+ \lambda_{\alpha\beta}^-K_{\alpha+\beta}^{-1}\rangle$
\end{itemize}
\item[2c)] Of course there exist the RCS in 2a). For an arbitrary $0\neq \lambda_{\alpha},\lambda_{\alpha}^-\in k*$ with $\lambda_{\alpha}\lambda_{\alpha}^-==\frac{q^2}{(1-q^2)(q-q^{-1})}$ and $i\in \mathbb{N}$ there exist moreover the following RCS :
\begin{itemize}
\item $\langle E_{\alpha}K_{\alpha}^{-1}+\lambda_{\alpha}K_{\alpha}^{-1}, (K_{\alpha}K_{\beta}^2)^i\rangle$
\item $\langle E_{\alpha}K_{\alpha}^{-1}+\lambda_{\alpha}K_{\alpha}^{-1}, (K_{\alpha}K_{\beta}^2)^i, F_{\beta}\rangle$
\item $\langle E_{\alpha}K_{\alpha}^{-1}+\lambda_{\alpha}K_{\alpha}^{-1}, (K_{\alpha}K_{\beta}^2)^i, F_{\beta}, F_{\alpha\beta}\rangle$
\item $\langle E_{\alpha}K_{\alpha}^{-1}+\lambda_{\alpha}K_{\alpha}^{-1}, (K_{\alpha}K_{\beta}^2)^i, F_{\alpha}+\lambda_{\alpha}^-K_{\alpha}^{-1}\rangle$
\end{itemize}
\end{itemize}

Now we give a complete list of all RCS which contain at least one element from \ref{listebasis} 3) in their generating set, such that it cannot be written as a sum of other generating elements:

\begin{itemize}
\item[3a)] For an arbitrary $0\neq \lambda_{\alpha}^+,\lambda_{\alpha}^-\in k*$ with $\lambda_{\alpha}^+\lambda_{\alpha}^-==\frac{q^2}{(1-q^2)(q-q^{-1})}$ there exist the following RCS : $\langle E_{\alpha\beta}K_{\alpha+\beta}^{-1}+\lambda_{\alpha}^+(1-q^{-2})E_{\beta}K_{\alpha+\beta}^{-1}+c_FF_{\beta\alpha}+c_F(q^{-1}-q)\lambda_{\alpha}^-F_{\beta}K_{\alpha}^{-1}, 
E_{\alpha}K_{\alpha}^{-1}+\lambda_{\alpha}^+K_{\alpha}^{-1},F_{\alpha}+\lambda_{\alpha}^-K_{\alpha}^{-1}\rangle$

\item[3b)] For an arbitrary $c_F\neq0$ and $i\in\mathbb{N}$ there are two options: 
\begin{itemize}
\item $\langle E_{\alpha}, F_{\beta}, E_{\alpha\beta}K_{\alpha+\beta}^{-1}+c_FF_{\alpha\beta}, (K_{\alpha}K_{\beta}^{-1})^i\rangle$
\item Let w.l.o.g. $\lambda_{\beta}^-=0$ and choose $\lambda_{\alpha}^+\in k$ and $c_K\in k$ such that $\lambda_{\alpha}^+c_K=0$
then there is another RCS given by\\
 $\langle E_{\alpha\beta}K_{\alpha+\beta}^{-1}+\lambda_{\alpha}(1-q^{-2})E_{\beta}K_{\alpha+\beta}^{-1}+c_FF_{\alpha\beta}+c_KK_{\alpha+\beta}^{-1},E_{\alpha}K_{\alpha}^{-1}+\lambda_{\alpha}^+K_{\alpha}^{-1}, 	 F_{\beta}\rangle$
\end{itemize}

\item[3c)] For $0\neq c_{\alpha},c_{\beta}\in k^*$ and suitably chosen 
$c_{\alpha}',c_{\beta}'$ there is only one possible RCS given by\\
$\langle 
qE_{\alpha\beta}K_{\alpha+\beta}^{-1}+c_{\alpha}(1-q^{-2}) F_{\alpha}E_{\beta}K_{\beta}^{-1}-c_{\alpha}c_{\beta}q^{-1}F_{\alpha\beta}+(1-q^{-2})c_{\beta}'c_{\alpha}F_{\alpha}K_{\beta}^{-1}+c_{\alpha}'(q-q^{-1})E_{\beta}K_{\alpha+\beta}^{-1}+c_{\alpha'}c_{\beta'}(1-q^{-1})K_{\alpha+\beta}^{-1},E_{\alpha}K_{\alpha}^{-1}+c_{\alpha}F_{\alpha}+c_{\alpha}'K_{\alpha}^{-1},E_{\beta}K_{\beta}^{-1}+c_{\beta}F_{\beta}+c_{\beta}'K_{\beta}^{-1}
\rangle$

\item[3d)] For an arbitrary $0\neq c_{\alpha},c_{\beta}\in k*$ there is only one possible RCS given by\\ 
$\langle (1-q^{-2})c_{\alpha}E_{\alpha}F_{\alpha}K_{\beta}^{-1}+q^{-2}E_{\beta\alpha}-q^{-1}c_{\alpha}c_{\beta}F_{\alpha\beta}+ \frac{c_{\alpha}-q^{-1}c_{\beta}}{q-q^{-1}}K_{\alpha+\beta}^{-1},E_{\beta}K_{\alpha}^{-1}+c_{\alpha}F_{\alpha},E_{\alpha}K_{\beta}^{-1}+c_{\beta}F_{\beta},K_{\alpha}K_{\beta}^{-1}
\rangle$

\item[3e)] Of course there is the RCS of (3d)). For an arbitrary $c_F\neq0$ there are three more types of RCS:
\begin{itemize}
\item $\langle E_{\alpha}K_{\beta}^{-1}+c_F F_{\beta},K_{\beta}^{-1}K_{\alpha}\rangle$
\item $\langle E_{\alpha}K_{\beta}^{-1}+c_F F_{\beta},K_{\beta}^{-1}K_{\alpha}, E_{\beta}K_{\beta}^{-1}, E_{\beta\alpha}K_{\alpha+\beta}^{-1}-\frac{q}{q-q^{-1}}c_FK_{\alpha+\beta}^{-1}\rangle$
\item $\langle E_{\alpha}K_{\beta}^{-1}+c_F F_{\beta},K_{\beta}^{-1}K_{\alpha}, E_{\beta}K_{\beta}^{-1}, E_{\beta\alpha}K_{\alpha+\beta}^{-1}-\frac{q}{q-q^{-1}}c_FK_{\alpha+\beta}^{-1}, F_{\alpha},F_{\beta\alpha}+\frac{1}{c_F(q-q^{-1})}K_{\alpha+\beta}^{-1}\rangle$
\end{itemize}

\item[3f)] For an arbitrary $0\neq c_F\in k^*$ there are two RCS
\begin{itemize}
\item $\langle E_{\alpha}K_{\alpha+\beta}^{-1}+c_FF_{\beta\alpha},K_{\beta}^{-1},F_{\beta}\rangle$
\item $\langle E_{\alpha}K_{\alpha+\beta}^{-1}+c_FF_{\beta\alpha},K_{\beta}^{-1},F_{\beta}, E_{\beta}, E_{\beta\alpha}K_{\alpha}^{-1}-c_FF_{\alpha}\rangle$
\end{itemize}

\item[3g)] Of course there exist the RCS from 3c). For an arbitrary $0\neq c_F\in k^*$ and $i\in \mathbb{N}$ there are moreover the following RCS:
\begin{itemize}
\item $\langle 
E_{\alpha}K_{\alpha}^{-1}+c_{\alpha}F_{\alpha}+c_{\alpha}'K_{\alpha}^{-1}(K_{\alpha}K_{\beta}^2)^i
\rangle$
\item $\langle 
E_{\alpha}K_{\alpha}^{-1}+c_{\alpha}F_{\alpha}+c_{\alpha}'K_{\alpha}^{-1},E_{\beta}K_{\beta}^{-1}, E_{\beta\alpha}K_{\alpha+\beta}^{-1},(K_{\alpha}K_{\beta}^2)^i
\rangle$
\end{itemize}

\end{itemize}
%
%

As an application of the underlying paper we could classify in \cite{Vocke16} all Borel subalgebras (maximal RCS with the property that each simple finite dimensional representation is one dimensional) of $U_q(\sl_3)$ with the knowledge of all RCS in $U_q(\sl_3)$. Next we give the construction of the three types of Borel subalgebras which appear to be all possibile Borel subalgebras due to the classificational result from \cite{Vocke16}.\\

\paragraph*{Type 1: Standard Borel subalgebras}
$U^{\geq0}$ and $U^{\leq0}$ are the socalled standard Borel subalgebras.\\

\paragraph*{Type 2: RCS with a non degeneracy property}
The Borel subalgebras $\psi(U^+[w^+])_{\phi^+}T_LU^-[w^-]_{\phi^-}$ with $\Phi^+(w^+)\cap\Phi^+(w^-)= \supp(\phi^+)\cap \supp(\phi^-)$ 
are up to symmetry isomorphic as algebra to $\psi(U^+[w_0])_{\phi^+}\langle K_{2\beta+\alpha},K_{2\beta+\alpha}^{-1}\rangle U^-[s_{\alpha}]_{\phi^-}$
with $\phi^+(E_{\alpha}K_{\alpha}^{-1})=\lambda_{\alpha}^+$ and $0$ otherwise, as well $\phi^-(F_{\alpha})=\lambda_{\alpha}^-$, such that $\lambda_{\alpha}^+\lambda_{\alpha}^-=\frac{q^2}{(1-q^2)(q-q^{-1})}$.
More precisely there are up to symmetry exactly two such Borel subalgebras. These are 
$$\psi(U^+[w_0])_{\phi^+}\langle K_{2\beta+\alpha},K_{2\beta+\alpha}^{-1}\rangle U^-[s_{\alpha}]_{\phi^-}$$
with characters as above and
$$\psi(U^+[s_{\alpha}s_{\beta}])_{\phi^+}\langle K_{\alpha-\beta}, K_{\alpha-\beta}^{-1}\rangle U^-[s_{\beta}s_{\alpha}]_{\phi^-}$$
with $\phi^+(E_{\alpha\beta}K_{\alpha+\beta}^{-1})=\lambda_{\alpha\beta}^+$ 
also $\phi^-(F_{\alpha\beta})=\lambda_{\alpha\beta}^-$, such that $\lambda_{\alpha\beta}^+\lambda_{\alpha\beta}^-=\frac{q^2}{(1-q^2)(q-q^{-1})}$\\

\paragraph*{Type 3: The RCS $\psi(U^+[s_{\alpha}s_{\beta}])_{\phi^+}\langle K_{2\beta+\alpha},K_{2\beta+\alpha}^{-1}\rangle U^-[s_{\alpha}s_{\beta}]_{\phi^-}$}\label{typ3insl3}

The third type of triangular Borel subalgebras in $U_q(\sl_3)$ is of the form 
$$\psi(U^+[s_{\alpha}s_{\beta}])_{\phi^+}\langle K_{2\beta+\alpha},K_{2\beta+\alpha}^{-1}\rangle U^-[s_{\alpha}s_{\beta}]_{\phi^-}$$
with $\phi^+(E_{\alpha}K_{\alpha}^{-1})=\lambda_{\alpha}^+$ and $0$ otherwise, and $\phi^-(F_{\alpha})=\lambda_{\alpha}^-$ and $0$ otherwise, such that $\lambda_{\alpha}^+\lambda_{\alpha}^-=\frac{q^2}{(1-q^2)(q-q^{-1})}$ as above.




\begin{thebibliography}{xxxxxx}
 

\bibitem[HK11a]{HK11a}
I.~Heckenberger and S.~Kolb,\\
\emph{Homogeneous right coideal subalgebras of quantized enveloping
 algebras}, Bull. London Math. Soc. 44, 837-848, 2012.


\bibitem[HK11b]{HK11b}
I.~Heckenberger and S.~Kolb,\\
\emph{Right coideal subalgebras of the {B}orel part of a quantized
 enveloping algebra}, Int. Math. Res. Not. IMRN, no.~2, pp
 419-451, 2011.
 
 
 
\bibitem[HS09]{HS09}
I.~Heckenberger and H.-J. Schneider,\\
\emph{Right coideal subalgebras of
 {N}ichols algebras and the {D}uflo order on the {W}eyl groupoid}, Israel Journal of Mathematics 197, 139-187, 2013.
 


 

\bibitem[Jan96]{Jan96}
J.~Jantzen,\\
\emph{Lectures in Quantum Groups},
Graduate Studies in Mathematics, Volume 6, American Mathematical Society, 1996.




\bibitem[LS91]{LS91}
S.~Levendorskii and Y.~Soibelman,\\
\emph{Algebras of functions on compact quantum groups, Schubert cells and quantum tori}, 
Commun. Math. Phys. 139, no.1, 141-170, 1991.



\bibitem[PA]{PA}
P. Papi,\\
\emph{A characterization of a special ordering in a root system}
Proceedings of the American Mathematical Society, Volume 120, pp 661-665, 1994.
 
 
\bibitem[Vocke16]{Vocke16}
K. Vocke,\\
\emph{Ueber Borelunteralgebren von Quantengruppen}
Dissertation University of Marburg.



\end{thebibliography}
\end{document}